\documentclass{amsart}
\usepackage{amsmath, amscd, amssymb, amsthm}
\usepackage{bbm}
\usepackage{latexsym}
\usepackage{amsfonts}
\usepackage{graphicx}
\usepackage[all,cmtip]{xy}
\usepackage[colorlinks,linkcolor=blue,breaklinks=blue,urlcolor=blue,citecolor=blue,anchorcolor=blue,pagebackref]{hyperref}%
\setcounter{MaxMatrixCols}{30}
\usepackage{geometry}
\geometry{left=3.5cm,right=3.5cm,top=2.8cm,bottom=2.5cm}

\newtheorem{lemma}{Lemma}

\newtheorem{proposition}{Proposition}

\renewcommand*\backref[1]{}
\renewcommand*\backrefalt[4]{ \ifcase #1 \or (cited on page #2) \else (cited on pages #2) \fi}

\newcommand{\be}{\begin{equation}}
\newcommand{\ee}{\end{equation}}
\newcommand{\bea}{\begin{eqnarray}}
\newcommand{\eea}{\end{eqnarray}}

\newcommand{\vs}{\vspace{0.5cm}}

\def\XXint#1#2#3{{\setbox0=\hbox{$#1{#2#3}{\int}$ }
\vcenter{\hbox{$#2#3$ }}\kern-.6\wd0}}

\begin{document}

\title[Lie algebras with abelian ideals of codimension $2$]{Hermitian geometry of Lie algebras with abelian ideals of codimension $2$}

\author{Yuqin Guo}
\address{Yuqin Guo. School of Mathematical Sciences, Chongqing Normal University, Chongqing 401331, China}
\email{{1942747285@qq.com}}\thanks{Zheng is partially supported by National Natural Science Foundations of China
with the grant No.12071050 and 12141101, Chongqing grant cstc2021ycjh-bgzxm0139, and is supported by the 111 Project D21024.}

\author{Fangyang Zheng}
\address{Fangyang Zheng. School of Mathematical Sciences, Chongqing Normal University, Chongqing 401331, China}
\email{20190045@cqnu.edu.cn; franciszheng@yahoo.com} \thanks{}

\subjclass[2010]{53C55 (primary), 53C05 (secondary)}
\keywords{Hermitian manifold; Chern connection; Bismut connection; balanced metrics; pluriclosed metrics}

\begin{abstract}
We examine Hermitian metrics on unimodular Lie algebras which contains a $J$-invariant abelian ideal of codimension two, and give a classification for all Bismut K\"ahler-like and all Bismut torsion-parallel metrics on such Lie algebras.
\end{abstract}

\maketitle

\tableofcontents

\section{Introduction and statement of results}\label{intro}

A major difficulty in the study of differential geometry of Hermitian manifolds is the algebraic complexity of the curvature tensor, which lack of symmetry due to the presence of torsion. When the universal cover of the manifold is a Lie group, the situation is much simpler in the sense that every point looks like every other point. But even in this case the torsion and curvature of the Hermitian metric already present rich and interesting algebraic complexity, which could serve as good sample spaces as well as provide guidance towards the study of the general case. For this reason, it makes sense to study Hermitian geometry of Lie groups.

Recall that a {\em Lie-Hermitian manifold} is a compact Hermitian manifold $(M^n,g)$ whose universal cover is $(G,J,g)$, where $G$ is an even-dimensional (connected and simply-connected) Lie group, equipped with a left-invariant complex structure $J$ and a left-invariant Riemannian metric $g$ compatible with $J$. The compactness of $M$ forces $G$ to be unimodular.

In the past decades, Lie-Hermitian manifolds were studied from various angles by A. Gray, S. Salamon, L. Ugarte, A. Fino, L. Vezzoni, F. Podest\`a, D. Angella, A. Andrada, and others. There is a vast amount of literature on this topic, here we will just mention a small sample: \cite{AU}, \cite{CFGU}, \cite{EFV}, \cite{FP3}, \cite{FinoTomassini}, \cite{FinoVezzoni}, \cite{GiustiPodesta}, \cite{Salamon}, \cite{Ugarte}, \cite{WYZ}, and interested readers could start their exploration there. For more general discussions on non-K\"ahler Hermitian geometry, see for example \cite{AI}, \cite{AT}, \cite{Fu}, \cite{STW}, \cite{Tosatti} and the references therein.

The cases when $G$ is nilpotent or when $\dim G\leq 6$ are relatively well-understood, while the non-nilpotent and general dimensional cases are less clear. One exception is perhaps the case of {\em almost abelian groups}, where the Lie algebra ${\mathfrak g}$ of $G$ has an abelian ideal of codimension one. A number of interesting results were obtained about this special type, see  for instance \cite{AO}, \cite{AL}, \cite{Bock}, \cite{CM}, \cite{FP1}, \cite{FP2}, \cite{LW}, \cite{Paradiso}, \cite{Ugarte} and the references therein for more details.

Another special types of Lie-Hermitian manifolds are quotients of {\em $2$-step solvable Lie groups}. In the beautiful recent papers \cite{FSwann},\cite{FSwann2}, Freibert and Swann studied the Hermitian geometry of such Lie groups, especially on balanced and pluriclosed (namely, SKT) structures.

The purpose of this article is two fold: on one hand we would like to demonstrate the convenience of using complex coordinates in the study of Hermitian structures on Lie algebras, and we would like to illustrate this point by summarizing some of the existing results about almost abelian Lie algebras, and give hopefully somewhat simplified computation and argument there. This will be treated in \S 3, where we will collect some known results on almost abelian Lie algebras and also prove a couple of new ones, regarding {\em Bismut torsion-parallel} metrics and {\em astheno-K\"ahler} metrics. The second purpose is to push this complex coordinate computation to another very special type of Lie algebras: those which contain an abelian ideal of codimension two. Note that such a Lie algebra is always solvable of step at most $3$, but in general it will not be solvable of step $2$.

Let ${\mathfrak a}$ be an abelian ideal of codimension $2$ in a Lie algebra ${\mathfrak g}$ of real dimension $2n$. If $J$ is an integrable almost complex structure on ${\mathfrak g}$, then the $J$-invariant subspace ${\mathfrak a}_J:= {\mathfrak a}\cap J {\mathfrak a}$ has codimension either $2$ or $4$ in ${\mathfrak g}$, and it is of codimension $2$ if and only if $J{\mathfrak a}={\mathfrak a}$. In this article we will focus on this simpler case, and mimic the study on almost abelian Lie algebras to characterize some special types of Hermitian structures on such Lie algebras. This will be carried out in \S 4. The main result is Proposition \ref{prop5} below, which gives a description of all Bismut torsion-parallel metrics on such Lie algebras.

We will prove the following statements regarding almost abelian Lie algebras or Lie algebras with $J$-invariant abelian ideals of codimension $2$. For an almost abelian Lie algebra ${\mathfrak g}$ equipped with a Hermitian structure $(J,g)$, there always exists a unitary basis $\{ e_1, \ldots , e_n\}$ of the complex vector space  ${\mathfrak g}^{1,0}=\{ x-\sqrt{-1}Jx \, \mid \, x\in {\mathfrak g}\}$ so that the structure equation of $g$ takes the form
\begin{equation} \label{structure1}
\left\{ \begin{split}  d\varphi_1 = - \lambda \,\varphi_1\wedge \overline{\varphi}_1 , \hspace{5.8cm} \\ d\varphi_i = - \overline{v}_i \, \varphi_1\wedge \overline{\varphi}_1 +  \sum_{j=2}^n \overline{A_{ij}} \,(\varphi_1 + \overline{\varphi}_1)\wedge \varphi_j , \ \ \ 2\leq i\leq n. \end{split} \right.
\end{equation}
Here $\varphi$ is the unitary coframe dual to $e$, $\lambda \in {\mathbb R}$, $v \in {\mathbb C}^{n-1}$, and $A$ is a complex $(n-1)\times (n-1)$ matrix. Such a frame will be called {\em admissible} for the Hermitian almost abelian Lie algebra.

Recall that a Hermitian metric is said to be {\em Bismut torsion-parallel} ({\em BTP} in short) if its Bismut connection has parallel torsion tensor. It is called {\em Bismut K\"ahler-like} ({\em BKL} in short) if the curvature tensor of the Bismut connection obeys all K\"ahler symmetries. An equivalent form of a conjecture proposed by Angella, Otal, Ugarte, Villacampa in \cite{AOUV} states that {\em BKL} metrics are always {\em BTP}. This was confirmed by Zhao and the second named author in  \cite{ZZ-pluriclosed}.

\begin{proposition} \label{prop1}
Let $({\mathfrak g},J,g)$ be a Hermitian structure on an almost abelian Lie algebra. Then $g$ is Bismut torsion-parallel if and only if it is Bismut K\"ahler-like if and only if $A+A^{\ast}=0$ and $Av=0$ under any admissible frame.
\end{proposition}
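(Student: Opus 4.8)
The plan is to work entirely within the structure equation \eqref{structure1}, which encodes all the torsion data of the Hermitian structure on an almost abelian Lie algebra in terms of the scalar $\lambda$, the vector $v$, and the matrix $A$. Since the Bismut connection, its torsion, and its curvature are all determined algebraically by the structure constants appearing in \eqref{structure1}, both the BTP condition (parallel Bismut torsion) and the BKL condition (Kähler symmetries of the Bismut curvature) should reduce to explicit algebraic equations in $\lambda$, $v$, and $A$. The goal is to show that each of these two conditions is equivalent to the single pair of equations $A+A^\ast = 0$ and $Av = 0$.

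\smallskip

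First I would read off the torsion tensor $T$ of the Bismut connection directly from \eqref{structure1}. Because $J\mathfrak{a}=\mathfrak{a}$ with $\mathfrak{a}$ abelian of codimension $2$, the only nontrivial brackets involve the direction $e_1$ (dual to $\varphi_1$), so the torsion components $T^j_{1\bar 1}$, $T^i_{1j}$, and their conjugates are expressible through $\lambda$, $v$, and $A$, with all components not involving the index $1$ vanishing. Next I would compute the covariant derivative $\nabla^B T$ and extract the condition $\nabla^B T = 0$ as a system in the structure constants; I expect the $(n-1)\times(n-1)$ block governed by $A$ to yield $A+A^\ast=0$ (the Hermitian-skew, i.e.\ anti-Hermitian, condition forcing the torsion in the $\mathfrak{a}$-directions to be parallel), while the coupling between the $v$-term and the $A$-term produces $Av=0$. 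The scalar $\lambda$ should drop out of the constraints, consistent with the statement.

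\smallskip

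For the BKL direction I would compute the Bismut curvature tensor from \eqref{structure1}, again exploiting that only $e_1$ generates brackets, and then impose the Kähler symmetry $R_{i\bar j k\bar l}=R_{k\bar j i\bar l}$ (equivalently the full first Bismut Ricci/curvature symmetry). This should again collapse to $A+A^\ast=0$ together with $Av=0$. A clean way to organize both computations is to invoke the known equivalence (\cite{ZZ-pluriclosed}, cited in the excerpt) that BKL implies BTP, so that it suffices to prove two implications: that $A+A^\ast=0,\ Av=0$ imply BKL, and that BTP implies $A+A^\ast=0,\ Av=0$; the remaining implication BKL $\Rightarrow$ BTP is then automatic, closing the cycle of equivalences. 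This trims the bookkeeping roughly in half.

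\smallskip

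The main obstacle I anticipate is purely computational: correctly assembling the Bismut connection forms and then the full curvature $2$-forms from \eqref{structure1} without sign or index errors, since the torsion terms mix the $\varphi_1\wedge\overline\varphi_1$ piece (through $\lambda$ and $v$) with the $(\varphi_1+\overline\varphi_1)\wedge\varphi_j$ piece (through $A$), and these cross terms are exactly what generate the coupling condition $Av=0$. The key simplification that makes this tractable is the admissible-frame normal form: because the structure equation is already diagonalized so that $e_1$ is the sole non-abelian direction, the curvature and $\nabla^B T$ both reduce to low-rank expressions, and verifying invariance of the final conditions under the residual freedom in the choice of admissible frame (the unitary changes fixing $e_1$) confirms that $A+A^\ast=0$ and $Av=0$ are frame-independent statements, as the wording ``under any admissible frame'' requires.
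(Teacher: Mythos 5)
Your plan for the \emph{BTP} half is essentially the paper's argument: write the Bismut connection coefficients $\Gamma^b=D+T$ from the admissible-frame data $(\lambda,v,A)$, impose $\nabla^bT=0$ componentwise, and observe that one block of equations forces $(A+A^{\ast})(A+A^{\ast})^{\ast}=0$, hence $A+A^{\ast}=0$, after which the only surviving torsion is $T^1_{1i}=v_i$ and the remaining equations reduce to $Av=0$; the converse is a direct check. Where you diverge is the \emph{BKL} half. You propose either to compute the full Bismut curvature and impose the K\"ahler symmetries directly, or to close a cycle of implications using only the one-way statement ``BKL $\Rightarrow$ BTP'' from \cite{ZZ-pluriclosed} --- but the cyclic route still obliges you to prove ``$A+A^{\ast}=0,\ Av=0\ \Rightarrow$ BKL'' by an explicit curvature computation, which is exactly the bookkeeping you flag as the main obstacle. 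The paper sidesteps this entirely by invoking the sharper equivalence from \cite{ZZ-pluriclosed} that BKL holds if and only if the metric is both BTP \emph{and} pluriclosed, and then quoting the Arroyo--Lafuente criterion (item (v) of Proposition \ref{propAL}): pluriclosedness is $\lambda(A+A^{\ast})+(A+A^{\ast})^2+[A^{\ast},A]=0$, which is automatic once $A$ is skew-Hermitian. So no Bismut curvature is ever computed. Your approach would work but at a real computational cost; if you carry it out, do verify the symmetry $R^b_{i\bar jk\bar\ell}=R^b_{k\bar ji\bar\ell}$ in all index positions, not just the Ricci traces. One small slip to correct: in the almost abelian setting the abelian ideal ${\mathfrak a}$ has codimension \emph{one} (it is ${\mathfrak a}_J={\mathfrak a}\cap J{\mathfrak a}$ that has codimension two and is not $J$-invariant equal to ${\mathfrak a}$); you have imported the hypothesis $J{\mathfrak a}={\mathfrak a}$ from the codimension-two case of \S 4, though this does not affect the validity of the structure equation (\ref{structure1}) you actually use.
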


Here and from now on $A^{\ast}$ stands for conjugate transpose $\overline{\,^t\!A}$. Note that ${\mathfrak g}$ is unimodular if and only if $\lambda + \mbox{tr}(A) + \overline{\mbox{tr}(A)} =0$. So when ${\mathfrak g}$ is unimodular, the skew-Hermitian property of $A$ implies that $\lambda =0$, and one can choose a unitary frame $e$ so that $A$ is diagonal.

Recall that  a Hermitian manifold $(M^n,g)$ is said to be {\em astheno-K\"ahler} \cite{JY}, if $\partial \overline{\partial} (\omega^{n-2})=0$, where $\omega$ is the K\"ahler form of $g$. For almost abelian Lie algebras, we have the following

\begin{proposition} \label{prop2}
Let $({\mathfrak g},J,g)$ be a Hermitian structure on a unimodular almost abelian Lie algebra. Assume that the complex dimension $n\geq 4$. Then $g$ is astheno-K\"ahler  if and only if $g$ is pluriclosed if and only if $[A^{\ast},A]=0$ and one of the eigenvalue of $A$ has real part equal to $-\frac{\lambda }{2}$ while the other eigenvalues have real part equal to $0$.
\end{proposition}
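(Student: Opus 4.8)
The plan is to compute $\partial\overline{\partial}\omega$ and $\partial\overline{\partial}(\omega^{n-2})$ directly from the structure equations \eqref{structure1}, and to show that both the pluriclosed condition and the astheno-K\"ahler condition collapse to the single matrix equation $M:=\lambda H+HA+A^{\ast}H=0$, where $H:=A+A^{\ast}$. First I would split $d\varphi_i=\partial\varphi_i+\overline{\partial}\varphi_i$ into its $(2,0)$ and $(1,1)$ parts; from \eqref{structure1} one reads off $\partial\varphi_1=0$ and $\partial\varphi_i=\sum_{j}\overline{A_{ij}}\,\varphi_1\wedge\varphi_j$ for $i\geq 2$. A short computation then gives $\partial\omega=\sqrt{-1}\,\varphi_1\wedge\eta$ with $\eta=\sum_{i,j}H_{ij}\,\varphi_i\wedge\overline{\varphi}_j+\sum_i v_i\,\varphi_i\wedge\overline{\varphi}_1$; in particular $\partial\omega$ depends on $A$ only through its Hermitian part $H$. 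Wedging once more and using $\partial\overline{\varphi}_1=\lambda\,\varphi_1\wedge\overline{\varphi}_1$, all $v$-terms drop out (they carry a repeated $\varphi_1$ or $\overline{\varphi}_1$), and I expect to obtain $\partial\overline{\partial}\omega=-\sqrt{-1}\,\varphi_1\wedge\overline{\varphi}_1\wedge\sum_{p,q}M_{qp}\,\overline{\varphi}_p\wedge\varphi_q$. Since the $(1,1)$-forms $\overline{\varphi}_p\wedge\varphi_q$ ($p,q\geq 2$) are linearly independent, $g$ is pluriclosed if and only if $M=0$.

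Then I would analyze $M=0$. Writing $A=\tfrac12 H+S$ with $S=\tfrac12(A-A^{\ast})$ skew-Hermitian, one checks $HA+A^{\ast}H=H^2+[A^{\ast},A]$ and $[A^{\ast},A]=[H,S]$, so $M=H^2+\lambda H+[H,S]$. Diagonalizing the Hermitian matrix $H=\mathrm{diag}(h_2,\dots,h_n)$ by a unitary change of the abelian coframe (which preserves admissibility), $H^2+\lambda H$ is diagonal while $[H,S]$ has entries $(h_i-h_j)S_{ij}$ and hence vanishing diagonal. Thus $M=0$ splits into the diagonal equations $h_i^2+\lambda h_i=0$ and the off-diagonal equations $(h_i-h_j)S_{ij}=0$; the latter say exactly $[H,S]=[A^{\ast},A]=0$, i.e. $A$ is normal, and the former say each $h_i\in\{0,-\lambda\}$. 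Feeding in unimodularity in the form $\lambda=-\mathrm{tr}(H)$ forces exactly one $h_i$ to equal $-\lambda$ when $\lambda\neq0$, which is precisely the stated condition on the real parts $\tfrac12 h_i=\re(d_i)$ of the eigenvalues $d_i$ of $A$.

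For the astheno-K\"ahler equivalence I would use the identity
\[
\partial\overline{\partial}(\omega^{n-2})=(n-2)\,\omega^{n-3}\wedge\partial\overline{\partial}\omega+(n-2)(n-3)\,\omega^{n-4}\wedge\partial\omega\wedge\overline{\partial}\omega ,
\]
valid for $n\geq4$, and compute each term. Splitting $\omega=\omega_1+\omega_0$ with $\omega_1=\sqrt{-1}\,\varphi_1\wedge\overline{\varphi}_1$ and $\omega_0$ the abelian part, and using $\omega_1^2=0$, every term factors as $\varphi_1\wedge\overline{\varphi}_1$ wedged with an $(n-2,n-2)$-form in the abelian directions, so the identity becomes a single equation among such forms. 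In the $H$-eigenbasis, $\partial\omega\wedge\overline{\partial}\omega$ reduces to $-\varphi_1\wedge\overline{\varphi}_1\wedge\eta_0^2$ with $\eta_0=\sum_i h_i\,\varphi_i\wedge\overline{\varphi}_i$ diagonal, so the second term contributes only to the diagonal basis forms, and the off-diagonal components of the whole equation come solely from $M_{ij}$ ($i\neq j$) in the first term. Hence the off-diagonal part again gives $(h_i-h_j)S_{ij}=0$, i.e. $A$ normal, exactly as in the pluriclosed case.

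It remains to treat the diagonal components, where I expect the main work to lie. Writing $\beta_i=\sqrt{-1}\,\varphi_i\wedge\overline{\varphi}_i$ and extracting the coefficient of $\varphi_1\wedge\overline{\varphi}_1\wedge\prod_{k\neq m}\beta_k$, the first term contributes a multiple of $G-g_m$ with $g_m=h_m^2+\lambda h_m$ and $G=\sum_i g_i$, while the second contributes a multiple of the truncated symmetric function $e_2(h)-h_m\,e_1(h)+h_m^2$. After the combinatorial factors are collected (they organize into a common $(n-2)!$), the vanishing of each diagonal coefficient should reduce to
\[
(p-h_m)(p-h_m+\lambda)=0,\qquad p:=\mathrm{tr}(H),
\]
for every $m$, and the crux is the observation that unimodularity $p=-\lambda$ collapses this to $h_m^2+\lambda h_m=0$, exactly the diagonal equation coming from $M=0$. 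Thus the astheno-K\"ahler system is term-by-term equivalent to $M=0$, and all three conditions coincide. The principal obstacles I anticipate are the bookkeeping of the elementary-symmetric coefficients in the two wedge products, and checking that the Lefschetz-type contraction by $\omega^{n-3}$, which is not injective on $(2,2)$-forms in this degree, nonetheless loses no information once combined with the correction term $\omega^{n-4}\wedge\partial\omega\wedge\overline{\partial}\omega$, so that the clean factorization above emerges rather than a messier quadratic in the $h_m$.
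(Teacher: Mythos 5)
Your proposal is correct and follows essentially the same route as the paper: one diagonalizes $H=A+A^{\ast}$, reduces pluriclosedness to $\lambda H+HA+A^{\ast}H=0$ (the Arroyo--Lafuente equation, item (v) of Proposition \ref{propAL}), and for astheno-K\"ahler uses the identity $\frac{1}{n-2}\partial\overline{\partial}\omega^{n-2}=\partial\overline{\partial}\omega\wedge\omega^{n-3}+(n-3)\,\partial\omega\wedge\overline{\partial}\omega\wedge\omega^{n-4}$, extracting off-diagonal coefficients to force $[A^{\ast},A]=0$ and diagonal ones to get exactly the paper's factorization $(\lambda+h-h_i)(h-h_i)=0$ with $h=\mathrm{tr}(H)$, which unimodularity ($h=-\lambda$) collapses to $h_i^2+\lambda h_i=0$. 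The combinatorial bookkeeping you flag is harmless (the two terms carry a common factor $(n-3)!$, not $(n-2)!$), and the non-injectivity of wedging with $\omega^{n-3}$ is irrelevant since one simply reads off all coefficients of the resulting $(n-1,n-1)$-form.
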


The characterization for pluriclosed metrics on almost abelian Lie algebras were obtained by Arroyo and Lafuente \cite{AL}. Note that when $n=3$ the astheno-K\"ahler and pluriclosed conditions coincide, so the above statement holds automatically.

If one drops the assumption that ${\mathfrak g}$ is unimodular, then from the proof of Proposition \ref{prop2} we know that $g$ is astheno-K\"ahler  if and only if $[A^{\ast},A]=0$ and the real part of the eigenvalues of $A$ satisfy the following condition:

{\em There exists an integer $0\leq k\leq n-1$ such that $k$ of them equal to $- \frac{(n-1-k)}{2(n-2)}\lambda$ while the other $n-1-k$ of them equal to $\frac{(k-1)}{2(n-2)}\lambda$.}

In the unimodular case, $k$ is necessarily $1$, so the real part of one eigenvalue of $A$ equals $-\frac{\lambda}{2}$ while the real part of the other eigenvalues of $A$ all equal to $0$.

We remark that, the simplicity of the structure equation (\ref{structure1}) enables one to explore Hermitian properties on almost abelian Lie algebras with relative ease, and the above two propositions were intended to serve as an illustration.

Next, let ${\mathfrak g}$ be a Lie algebra equipped with a Hermitian structure $(J,g)$, and let ${\mathfrak a}\subseteq {\mathfrak g}$ be a $J$-invariant abelian ideal of codimension $2$. Then there always exists unitary frame $e$ under which the structure equation becomes
\begin{equation} \label{structure2}
\left\{ \begin{split} d\varphi_1 \, = \, -\lambda \varphi_1\overline{\varphi}_1 , \hspace{4.1cm} \\
d\varphi \, = \, - \varphi_1\overline{\varphi}_1 \overline{v} -\varphi_1\,^t\!X \varphi + \overline{\varphi}_1 \overline{Y} \varphi - \varphi_1 \overline{Z} \,\overline{\varphi}. \end{split} \right.
\end{equation}
Here we wrote $\varphi = \,^t\!(\varphi_2, \ldots , \varphi_n)$ as a column vector, while $\lambda \geq 0$, $v\in {\mathbb C}^{n-1}$, and $X$, $Y$, $Z$ are complex $(n-1)\times (n-1)$ matrices satisfying
\begin{equation} \label{XYZ}
\left\{ \begin{split} \lambda (X^{\ast}\!+Y)+ [X^{\ast} ,Y]  -  Z\overline{Z} \, = \, 0, \\
\lambda Z - ( Z \,^t\!X + Y Z )\, =\, 0. \hspace{1.2cm} \end{split} \right.
\end{equation}
Such a unitary frame $e$ will be called an {\em admissible frame}.  Note that the algebraic data involves three matrices satisfying a system of matrix equations and are tangled up, unlike the almost abelian case. Also, when $\lambda \neq 0$, ${\mathfrak g}$ is not $2$-step solvable in general, although it is always $3$-step solvable.

\begin{proposition} \label{prop3}
Let $({\mathfrak g},J,g)$ be a Lie algebra with Hermitian structure and let ${\mathfrak a}\subseteq {\mathfrak g}$ be a $J$-invariant abelian ideal of codimension $2$. Then under any admissible frame,
\begin{enumerate}
\item ${\mathfrak g}$ is unimodular if and only if $\, \lambda - \mbox{tr}(X)+\mbox{tr}(Y)=0$,
\item $g$ is balanced if and only if $\, \mbox{tr}(X)=\mbox{tr}(Y)$ and $v=0$,
\item $g$ is K\"ahler if and only if $v=0$, $\,^t\!Z=Z$, and $X=Y$. When ${\mathfrak g}$ is unimodular, $g$ is K\"ahler if and only if $\lambda =0$, $v=0$, $Z=0$, and $X=Y$ is normal. In this case there is a unitary frame under which the structure equation becomes
    \begin{equation*}
    d\varphi_1= 0, \ \ \ d\varphi_i = (\overline{a}_i\overline{\varphi}_1-a_i\varphi_1)\varphi_i, \ \ \ \ 2\leq i\leq n.
    \end{equation*}
    where $a_2, \ldots , a_n$ are arbitrary complex numbers.
\item $g$ is pluriclosed (SKT) if and only if it satisfies (\ref{XYZ}) and the equation below:
\begin{equation} \label{SKT-XYZ}
 XX^{\ast}-YX^{\ast} + \,^t\!Z \overline{Z} - Z \overline{Z}    + Y^{\ast} Y - Y^{\ast} X + \lambda (Y-X) = 0.
 \end{equation}
\end{enumerate}
\end{proposition}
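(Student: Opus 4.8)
The plan is to translate each of the four Hermitian conditions directly into the structure data $(\lambda, v, X, Y, Z)$ of (\ref{structure2}). Writing the K\"ahler form as $\omega = \sqrt{-1}\sum_{i=1}^n \varphi_i\wedge\overline{\varphi}_i$, the first three parts are comparatively quick. Unimodularity is the vanishing of $\mathrm{tr}(\mathrm{ad}_x)$ for all $x$, which upon reading off the trace of the structure constants in (\ref{structure2}) becomes $\lambda - \mathrm{tr}(X) + \mathrm{tr}(Y) = 0$, giving (1). The balanced condition $d(\omega^{n-1}) = 0$ and the K\"ahler condition $d\omega = 0$ are obtained by computing $d\omega$ from (\ref{structure2}) and collecting the coefficients of the monomials $\varphi_1\wedge\varphi_i\wedge\overline{\varphi}_j$ and $\varphi_1\wedge\overline{\varphi}_1\wedge\varphi_i$, which yields (2) and (3) together with the normal-form reductions stated there. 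The real substance lies in part (4).

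For the pluriclosed condition I would compute the $(2,2)$-form $\partial\overline{\partial}\omega$ and determine when it vanishes. First split $\omega = \omega_0 + \omega_1$ with $\omega_0 = \sqrt{-1}\,\varphi_1\wedge\overline{\varphi}_1$ and $\omega_1 = \sqrt{-1}\sum_{i\geq 2}\varphi_i\wedge\overline{\varphi}_i$. Since $d\varphi_1 = -\lambda\varphi_1\wedge\overline{\varphi}_1$ is of type $(1,1)$ and $\overline{\partial}\varphi_1$ is a multiple of $\varphi_1\wedge\overline{\varphi}_1$, one checks that $\overline{\partial}\omega_0 = 0$, so $\partial\overline{\partial}\omega = \partial\overline{\partial}\omega_1$. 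Next I would decompose $d\varphi_i = \partial\varphi_i + \overline{\partial}\varphi_i$ for $i\geq 2$ into its $(2,0)$ part $\partial\varphi_i = -\varphi_1\wedge(\,^t\!X\varphi)_i$ and its $(1,1)$ part $\overline{\partial}\varphi_i = -\overline{v}_i\,\varphi_1\wedge\overline{\varphi}_1 + \overline{\varphi}_1\wedge(\overline{Y}\varphi)_i - \varphi_1\wedge(\overline{Z}\,\overline{\varphi})_i$. Because $\mathfrak a$ is abelian, every term of $d\varphi_i$ for $i\geq 2$ carries a factor $\varphi_1$ or $\overline{\varphi}_1$; hence after the two differentiations defining $\partial\overline{\partial}\omega_1$ every monomial with a repeated $\varphi_1$ or $\overline{\varphi}_1$ dies, and one is left with
\[
\partial\overline{\partial}\omega \, = \, \varphi_1\wedge\overline{\varphi}_1\wedge\Theta, \qquad \Theta = \sum_{i,j\geq 2} M_{ij}\,\varphi_i\wedge\overline{\varphi}_j,
\]
a $(1,1)$-form in the ideal directions. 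Thus $g$ is pluriclosed precisely when the matrix $M$ vanishes.

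The heart of the argument is the explicit evaluation of $M$ in terms of $X, Y, Z, \lambda$, carried out by collecting contributions according to their source: pairing the $(2,0)$ $X$-term with its conjugate yields $XX^{\ast}$; the $\overline{Y}$-term paired with its conjugate yields $Y^{\ast}Y$; the cross-pairings between the $X$- and $Y$-terms produce $-YX^{\ast}-Y^{\ast}X$; the $\overline{Z}$-term produces $\,^t\!Z\overline{Z} - Z\overline{Z}$; and differentiating against $d\varphi_1 = -\lambda\varphi_1\wedge\overline{\varphi}_1$ contributes the $\lambda(Y-X)$ terms. Assembling these gives exactly the left-hand side of (\ref{SKT-XYZ}), so $M = 0$ is equivalent to (\ref{SKT-XYZ}); along the way the $v$-terms cancel, and the residual relations are absorbed using the integrability constraints (\ref{XYZ}), which themselves encode $d^2 = 0$.

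The main obstacle I expect is the bookkeeping of signs, wedge orderings, and, most delicately, keeping the non-commuting matrices $X, Y, Z$ together with their transposes and conjugates in the correct positions; it is precisely this non-commutativity that breaks the apparent symmetry of (\ref{SKT-XYZ}), namely the asymmetry between $XX^{\ast}-YX^{\ast}$ and $Y^{\ast}Y-Y^{\ast}X$, so that an error in transpose-versus-conjugate or in the order of a product would corrupt the final matrix equation. A secondary care point is to justify rigorously that no monomials outside the type $\varphi_1\wedge\overline{\varphi}_1\wedge\varphi_i\wedge\overline{\varphi}_j$ survive; this follows from the abelian structure of $\mathfrak a$, but it must be verified so that $M = 0$ genuinely captures the full content of $\partial\overline{\partial}\omega = 0$.
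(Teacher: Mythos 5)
Your proposal is correct and follows essentially the same route as the paper: both translate each condition into the data $(\lambda,v,X,Y,Z)$, the paper simply routing the computation through the general formulas of \S 2 (the unimodularity criterion (\ref{unimo}), the torsion components (\ref{torsion}) together with $\eta$ in (\ref{eta}), and Lemma \ref{lemma2}) rather than recomputing $d\omega$, $d(\omega^{n-1})$ and $\partial\overline{\partial}\omega$ from scratch, and the specialization of (\ref{SKT}) to $i=j=1<k,\ell$ produces exactly your matrix $M$ and the identity (\ref{SKT-XYZ}). One small bookkeeping point: for the K\"ahler condition the monomials $\varphi_i\wedge\varphi_j\wedge\overline{\varphi}_1$ with $i,j\geq 2$ also occur in $\partial\omega$, and it is their coefficients $Z_{ji}-Z_{ij}$ (the torsion components $T^1_{ij}$) that yield the condition $\,^t\!Z=Z$, a monomial type missing from the list in your sketch.
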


From items (ii) and (iv) above, one can write down explicit examples of balanced metrics or pluriclosed metrics on such ${\mathfrak g}$. See \S 4 for more details. Similar to the almost abelian case, one could also analyze the behavior of the Chern and Bismut curvature of $g$. In particular, one has the following

\begin{proposition} \label{prop4}
Let $({\mathfrak g},J,g)$ be a Lie algebra with Hermitian structure and ${\mathfrak a}$ be a $J$-invariant abelian ideal of codimension $2$. Then the Chern curvature satisfies the following
\begin{enumerate}
\item $g$ is Chern flat  if and only if $\lambda =0$, $v=0$, $Z=0$, $[Y,Y^{\ast}]=0$, and $[Y,X^{\ast}]=0$. In this case one can choose unitary frame so that $Y$ is diagonal and $X$ is block-diagonal in the form (\ref{Chernflat}) below;
 \item The rank of the first Chern Ricci $Ric^{(1)}$ is at most $1$, and it has the same sign with the Chern scalar curvature $s$. In particular, $Ric^{(1)}=0\  \Longleftrightarrow \  s=0\  \Longleftrightarrow  \ \lambda =0  \ \mbox{or} \  \lambda =-\mbox{Re}\{ \mbox{tr}(Y)\}$;
    \item If the second Chern Ricci $Ric^{(2)}\geq 0$, then $R=0$;
    \item For the (first) Bismut Ricci curvature $Ric^b$, its $(2,0)$-part has rank $2$ or $0$, and its $(1,1)$-part has rank at most $1$ if $v=0$ and has rank at most $3$ in general. $g$ is Bismut Ricci flat  if and only if $Xv=0$, $Y^{\ast}v+\,^t\!Z\overline{v}=0$, and $s^b=0$, where the Bismut scalar curvature $s^b$ is given by (\ref{Bismutscalar}) below. In particular, $g$ is Bismut Ricci flat when $v=0$ and $\lambda =0$, or when $v=0$ and $\lambda = \mbox{Re}\{ \mbox{tr}(X) - 2 \mbox{tr}(Y)\}$.
    \end{enumerate}
\end{proposition}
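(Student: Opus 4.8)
The plan is to read the Chern and Bismut connection matrices directly off the structure equation (\ref{structure2}) in an admissible unitary coframe, compute curvature as $\Theta=d\theta-\theta\wedge\theta$, and then obtain each assertion by taking the appropriate trace. First I would extract the Chern connection matrix $\theta$: it is the unique skew-Hermitian matrix of $1$-forms for which $d\varphi+\theta\wedge\varphi$ has pure type $(2,0)$. Thus its $(0,1)$-component is read off from the $(1,1)$-parts of (\ref{structure2}) --- the terms carrying $\lambda$, $\overline{v}$, $\overline{Y}$, $\overline{Z}$ --- its $(1,0)$-component is then forced by skew-Hermiticity, and the remaining $(2,0)$-term $-\varphi_1\,^t\!X\varphi$ records the torsion. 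With $\theta$ in hand I would compute $\Theta=d\theta-\theta\wedge\theta$, substituting (\ref{structure2}) for every $d\varphi$ that occurs and repeatedly invoking the two constraints (\ref{XYZ}) (together with their conjugates and transposes) to collapse the quadratic matrix expressions into closed form. The $(1,1)$-part of $\Theta$ carries the Chern curvature components, and all of items (i)--(iii) follow by specializing this tensor.

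For item (i) I would set $\Theta\equiv 0$: vanishing first yields $\lambda=0$, $v=0$, $Z=0$, after which the surviving blocks reduce to $[Y,Y^{\ast}]=0$ and $[Y,X^{\ast}]=0$. Normality of $Y$ then allows, after a constant unitary change of the frame $(\varphi_2,\ldots,\varphi_n)$ --- which acts on the triple $(X,Y,Z)$ by conjugation and preserves the shape of (\ref{structure2}) --- to take $Y$ diagonal; $[Y,X^{\ast}]=0$ forces $X$ to be block-diagonal along the eigenspaces of $Y$, which is the normal form (\ref{Chernflat}). For item (ii) I would trace $\Theta$ over one pair of indices to obtain $Ric^{(1)}$; the computation shows every entry but the $(1,\bar 1)$ one vanishes, so $Ric^{(1)}$ is a real multiple of $\varphi_1\wedge\overline{\varphi}_1$, giving rank at most $1$ and a sign equal to that of $s=\mbox{tr}\,Ric^{(1)}$. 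Factoring this scalar as a quadratic in $\lambda$ produces the two roots $\lambda=0$ and $\lambda=-\mbox{Re}\{\mbox{tr}(Y)\}$.

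For item (iii) I would trace $\Theta$ over the other pair of indices to get $Ric^{(2)}$ and then argue by rigidity rather than by direct reading-off: the diagonal entries of $Ric^{(2)}$ organize into sums of manifestly nonpositive terms --- squared norms built from the torsion data $v,X,Y,Z$ together with the $\lambda$-contribution --- so the single hypothesis $Ric^{(2)}\geq 0$ forces each of these terms to vanish, and tracking the vanishings back shows $R=0$. Finally, for item (iv) I would rerun the whole scheme for the Bismut connection $\theta^{b}$, which differs from $\theta$ by explicit torsion corrections of type $(1,0)$ and $(0,1)$, compute its curvature, and split $Ric^{b}$ into $(2,0)$- and $(1,1)$-parts. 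The $(2,0)$-part is driven entirely by $v$ and has rank $2$ or $0$; the $(1,1)$-part reduces to a form of rank at most $1$ when $v=0$ and at most $3$ in general. Setting $Ric^{b}=0$ yields the vector equations $Xv=0$ and $Y^{\ast}v+\,^t\!Z\overline{v}=0$ together with $s^{b}=0$ for the Bismut scalar (\ref{Bismutscalar}), and the two special cases follow by substituting $v=0$ and the stated values of $\lambda$.

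I expect the obstacle to be twofold. The laborious part is the bookkeeping: since $X$, $Y$, $Z$ are tangled through (\ref{XYZ}), reducing $d\theta-\theta\wedge\theta$ to closed form requires inserting both constraint identities at exactly the right places, and the Bismut computation in (iv) compounds this with the torsion corrections and a genuinely longer connection matrix. The conceptually non-mechanical step, however, is the rigidity argument in (iii): unlike the other items it does not amount to reading off a curvature component, but depends on arranging $Ric^{(2)}$ as a sum of terms of one sign so that the scalar inequality $Ric^{(2)}\geq 0$ propagates to the vanishing of the entire tensor $R$.
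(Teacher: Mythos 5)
Your proposal is correct and follows essentially the same route as the paper: read off the structure constants $\lambda, v, X, Y, Z$ in an admissible frame, apply the curvature and Ricci formulas from \S 2, and take traces (the paper streamlines item (iv) by computing only $\mbox{tr}(\Theta^b)=\mbox{tr}(\Theta)+d\eta-d\overline{\eta}$ rather than the full Bismut curvature, and in (iii) the lower-right block reduces to the traceless matrix $[Y,Y^{\ast}]$, which vanishes once it is positive semidefinite). One small caveat: the Chern curvature depends only on the $D$-tensor, i.e.\ on $\lambda, v, Y, Z$ and not on $X$ at all, so in item (i) the condition $[Y,X^{\ast}]=0$ cannot be extracted from vanishing curvature blocks --- it comes from the Jacobi constraint (\ref{XYZ}) once $\lambda=0$ and $Z=0$, which you do plan to invoke.
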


In the Chern flat case, since $Y$ is normal and $[X^{\ast},Y]=0$, we can choose a unitary frame $e$ so that $Y$ is diagonal and $X$ is block-diagonal in the following way:
\begin{equation} \label{Chernflat}
Y = \left[ \begin{array}{ccc} \lambda_1 I_{n_1} & & \\ & \ddots & \\ && \lambda_rI_{n_r} \end{array} \right], \ \ \ \ \ \ X = \left[ \begin{array}{ccc} X_1 & & \\ & \ddots & \\ && X_r \end{array} \right],
\end{equation}
where $n_1+ \cdots +n_r=n-1$ and $\lambda_1, \ldots , \lambda_r$ are distinct, while each $X_i$ is an arbitrary $n_i\times n_i$ matrix. The Bismut scalar curvature in (iv) above is given by
\begin{equation} \label{Bismutscalar}
s^b \, = \, -2|v|^2 - \lambda \left( 2\lambda + 2\mbox{tr}(Y) + 2\overline{\mbox{tr}(Y)} -\mbox{tr}(X) -\overline{\mbox{tr}(X)}\right) .
\end{equation}
A Hermitian metric $g$ with zero Bismut (first) Ricci curvature is said to be {\em Calabi-Yau with torsion}, or {\em CYT} in short. In \cite[Theorem 3.1]{FP2}, Fino and Paradiso characterized CYT metrics on almost abelian Lie algebras, and item (iv) above is just analogous to their result.

Also analogous to the almost abelian case, for ${\mathfrak g}$ with $J$-invariant abelian ideal of codimension $2$, one could also ask when will the metric be Bismut torsion-parallel or Bismut K\"ahler-like. The following is the main result of this article:

\begin{proposition} \label{prop5}
Let $({\mathfrak g},J,g)$ be a unimodular Lie algebra with Hermitian structure and ${\mathfrak a}$ be a $J$-invariant abelian ideal of codimension $2$. Then $g$ is Bismut K\"ahler-like if and only if there exists a unitary frame under which the structure equation is given by (\ref{BTPv1}) below. $g$ is Bismut torsion-parallel if and only if there exists a unitary frame under which the structure equation is given by (\ref{BTPv1}), or (\ref{BTPv2}), or (\ref{BTPv0}).
\end{proposition}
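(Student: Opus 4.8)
The plan is to turn the parallel-torsion condition into a finite system of algebraic identities in the structure data $(\lambda, v, X, Y, Z)$ and then solve it together with the compatibility constraints (\ref{XYZ}). The first step is to record, from the structure equations (\ref{structure2}), the components of the Bismut torsion $3$-form (given by $d^c\omega$ up to sign) and of the Bismut connection $\nabla^b$ in the unitary frame $e$ dual to $\varphi$. Since the metric is left-invariant all these components are constant, so $\nabla^b T$ is obtained purely by letting the (skew-Hermitian, matrix-valued) connection $1$-form act on the constant torsion components; no differential equation survives, and the whole problem becomes algebraic. The nonzero torsion is governed by $\lambda$ (from $d\varphi_1$) and by the blocks $v, X, Y, Z$ (from $d\varphi$), with the $(2,0)$-type torsion carried by $X$ and the mixed-type torsion carried by $Y, Z, v$; reading off the connection coefficients from (\ref{structure2}) is a direct, if lengthy, bookkeeping exercise.

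Second, I would impose $\nabla^b T = 0$ componentwise. This yields a collection of matrix equations relating $X, Y, Z, v$ and $\lambda$ --- schematically, commutator and product relations of the same flavor as (\ref{XYZ}) but now encoding parallelism rather than integrability. Combining these with (\ref{XYZ}) and the unimodularity relation $\lambda - \mathrm{tr}(X) + \mathrm{tr}(Y) = 0$ from Proposition \ref{prop3}(i), I expect the system to force strong structural conclusions: that $Z$ and $v$ are heavily constrained, that the relevant matrices become simultaneously normalizable, and that $\lambda$ is tied to the spectra of $X$ and $Y$. The analysis then branches according to whether $v = 0$ or $v \neq 0$, and within each branch whether $\lambda = 0$; each branch should collapse to exactly one of the normal forms (\ref{BTPv1}), (\ref{BTPv2}), (\ref{BTPv0}).

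Third, in each branch I would use the residual freedom to change $e$ by a unitary transformation of $\{e_2, \ldots, e_n\}$ together with a phase on $e_1$, preserving the shape of (\ref{structure2}), while tracking how $X, Y, Z, v$ transform (by conjugation, congruence, and as a vector, respectively), so as to bring the surviving matrices to the diagonal or block form displayed in (\ref{BTPv1})--(\ref{BTPv0}). One must also verify the converse --- that each listed normal form indeed satisfies (\ref{XYZ}) and has $\nabla^b T = 0$ --- which is a finite check. For the Bismut K\"ahler-like case I would impose the full set of K\"ahler symmetries on the Bismut curvature $R^b$; since BKL implies BTP by \cite{ZZ-pluriclosed}, every BKL metric already appears in the BTP list, so it suffices to test the three normal forms against the curvature-symmetry identities and show that only (\ref{BTPv1}) survives.

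The main obstacle will be the coupled, nonlinear nature of the matrix system: because $Z$ enters conjugate-linearly (through $\overline{Z}$ and products such as $Z\overline{Z}$ and $\,^t\!Z\overline{Z}$), one cannot simply diagonalize simultaneously, and the commutator $[X^{\ast}, Y]$ ties together the unitary parts of $X$ and $Y$. Disentangling these relations --- in particular showing that the BTP equations together with (\ref{XYZ}) force $Z$ into its normal position and pin $\lambda$ to the eigenvalue data --- and then confirming that the case analysis exhausts all solutions without leaving stray parameters beyond those appearing in the three normal forms, is where the real work lies.
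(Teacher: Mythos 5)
Your plan follows essentially the same route as the paper: reduce $\nabla^b T=0$ to a coupled algebraic system in $(\lambda,v,X,Y,Z)$ using that all components are constant, combine it with (\ref{XYZ}) and unimodularity, branch on $v=0$ versus $v\neq 0$, and bring each branch to normal form by unitary changes of frame, finally checking the converse and isolating the BKL case via \cite{ZZ-pluriclosed}. Two caveats: the $v\neq 0$ branch actually yields two normal forms, (\ref{BTPv1}) and (\ref{BTPv2}), rather than ``exactly one,'' and for the BKL statement the paper invokes the full equivalence BKL $\Leftrightarrow$ BTP $+$ pluriclosed, so the last step reduces to testing (\ref{SKT-XYZ}) on the three forms instead of verifying Bismut curvature symmetries directly; moreover the step you explicitly defer --- the $v=0$ branch, where one must prove $\lambda=0$, $BZ=0$ for $B=Y-X$, pin down the block structure of $Z$, $A$, $B$, and solve the system $\overline{z}\,^t\!z=\overline{b}\,^t\!b$, $\,^t\!z\,\overline{z}=b^{\ast}b$, $b\,^t\!z=z\,^t\!b$ by a polar-decomposition argument --- is where all the content of (\ref{BTPv0}) lives, so the proposal as written is a correct outline rather than a complete proof.
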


\begin{equation} \label{BTPv1}
 \left\{ \begin{split} d\varphi_1 = 0,  \hspace{4.2cm}\\
d\varphi_2 = - v_2 \varphi_1\overline{\varphi}_1, \hspace{3cm}\\
d\varphi_i = (\overline{a}_i\overline{\varphi}_1 - a_i \varphi_1)\varphi_i, \ \ \ 3\leq i\leq n;
\end{split} \right.   \ \ \ \ \ \ \ \ \ \ \ \ \ \ \
\end{equation}
where $v_2>0$ and $a_3, \ldots , a_n$ are complex numbers.

\begin{equation} \label{BTPv2}
 \left\{ \begin{split} d\varphi_1 =d\varphi_3 =  0, \hspace{3.15cm} \\
d\varphi_2 = - v_2 \varphi_1\overline{\varphi}_1 + p \,(\overline{\varphi}_1\varphi_3 - \varphi_1 \overline{\varphi}_3),  \hspace{0.1cm} \\
d\varphi_i = (\overline{a}_i\overline{\varphi}_1 - a_i \varphi_1)\varphi_i, \ \ \ 4\leq i\leq n.
\end{split} \right.
\end{equation}
where $v_2>0$, $p>0$, and $a_4, \ldots , a_n$ are complex numbers.

\begin{equation}  \label{BTPv0}
\left\{ \begin{split} d\varphi_1 = 0, \hspace{6.4cm} \\
d\varphi_i =  \lambda_i \overline{\varphi}_1\varphi_i - \lambda_i \varphi_1 \sum_{j=2}^{r+1} \overline{W_{ij}}\varphi_j ,  \ \ \ \ \ 2\leq i\leq r+1, \hspace{0.1cm} \\
d \varphi_i = 0, \ \ \ \ \ \ \ \ \  r+2\leq i\leq 2r+1,   \hspace{2.45cm}\\
d\varphi_i = (\overline{a}_i\overline{\varphi}_1 - a_i \varphi_1)\varphi_i, \ \ \ \ \ 2r+2\leq i\leq n.   \hspace{1.2cm}
\end{split} \right.
\end{equation}
where $r$ is an integer with $1\leq r \leq \frac{n-2}{2}$, each $a_i$ is a complex number,  $S=\mbox{diag} \{ \lambda_2 , \ldots , \lambda_{r+1}\} > 0$ and $W$ is any unitary symmetric $r\times r$ matrix satisfying $WS=SW$.

Note that (\ref{BTPv1}) and (\ref{BTPv2}) are not balanced, while (\ref{BTPv0}) is balanced. Analogous to the almost abelian case, one could show that, {\em for any unimodular Lie algebra with $J$-invariant abelian ideal of codimension $2$,  if the metric is Chern K\"ahler-like, then it must be Chern flat.} So again there is no non-trivial example of compact Chern K\"ahler-like manifolds amongst  this type of Lie algebras. Also analogous to the almost abelian case, for unimodular ${\mathfrak g}$ with $J$-invariant abelian ideal of codimension $2$, when the complex dimension $n\geq 4$, then one can write down explicitly the matrix equation for the metric to be astheno-K\"ahler, in the form $P=\mu I$ where $P$ is a matrix involving $X$, $Y$, $Z$ and $\mu$ is a constant. The computation is strictly analogous but slightly more tedious, and we will skip it here.

The article is organized as follows. In \S 2 we will set up notations and collect some basic formula for Lie-Hermitian manifolds. In \S 3 we will recall some known properties about almost abelian groups and also prove a couple of new ones: Propositions \ref{prop1} and \ref{prop2} stated above. In \S 4, we will turn our attention to Lie algebras with $J$-invariant abelian ideals of codimension $2$, and prove a number of properties including Propositions \ref{prop3} - \ref{prop5} stated above.

\vspace{0.3cm}

\section{Preliminary on Lie-Hermitian manifolds}

In this section, we will collect some basic formula for Lie-Hermitian manifolds that will be used later. We will follow the notations in \cite{VYZ}.

Let $(M^n,g)$ be a Lie-Hermitian manifold, with universal cover $(G,J,g)$ where $G$ is an even-dimensional Lie group and $J$ is a left-invariant complex structure on $G$, while $g$ is a left-invariant Riemannian metric on $G$ compatible with $J$. As we mention before, the compactness of $M$ forces $G$ to be unimodular.

Denote by ${\mathfrak g}$ the Lie algebra of $G$, and use the same letter $J$ or $g=\langle , \rangle $ to denote respectively the almost complex structure or inner product on ${\mathfrak g}$ corresponding to that of $G$. As is well known, the integrability of $J$ is characterized by the property
\begin{equation} \label{int}
[x,y] - [Jx,Jy] + J[Jx,y] + J[x,Jy] = 0, \ \ \ \ \forall \ x,y\in {\mathfrak g}.
\end{equation}
Denote by ${\mathfrak g}^{\mathbb C}$ the complexification of ${\mathfrak g}$, and by ${\mathfrak g}^{1,0}= \{ x-\sqrt{-1}Jx \mid x \in {\mathfrak g}\} \subseteq {\mathfrak g}^{\mathbb C}$. The condition (\ref{int}) simply says that ${\mathfrak g}^{1,0}$ is a complex Lie subalgebra of ${\mathfrak g}^{\mathbb C}$. Extend $g=\langle , \rangle $ bi-linearly over ${\mathbb C}$, and let $e=\{ e_1, \ldots , e_n\}$ be a unitary basis of ${\mathfrak g}^{1,0}$. Following \cite{VYZ}, we will use
\begin{equation*} \label{CandD}
C^j_{ik} = \langle [e_i,e_k],\overline{e}_j \rangle, \ \ \ \ \ \  D^j_{ik} = \langle [\overline{e}_j, e_k] , e_i \rangle
\end{equation*}
to denote the structure constants, or equivalently, under the unitary frame $e$ we have
\begin{equation} \label{CandD2}
[e_i,e_j] = \sum_k C^k_{ij}e_k, \ \ \ \ \ [e_i, \overline{e}_j] = \sum_k \big( \overline{D^i_{kj}} e_k - D^j_{ki} \overline{e}_k \big) .
\end{equation}
Note that ${\mathfrak g}$ is unimodular if and only if $\mbox{tr}(ad_x)=0$ for any $x\in {\mathfrak g}$, which is equivalent to
\begin{equation} \label{unimo}
{\mathfrak g} \ \, \mbox{is unimodular}  \ \ \Longleftrightarrow  \ \ \sum_r \big( C^r_{ri} + D^r_{ri}\big) =0 , \, \ \forall \ i.
\end{equation}

We should note that two Lie-Hermitian manifolds could be holomorphically isometric to each other, while their universal covers are not isomorphic as Lie groups. For instance, there are Lie-Hermitian manifold $M^n$ with non-abelian $G$ whose metric turns out to be K\"ahler and flat, thus forcing $M^n$ to be a quotient of the abelian group ${\mathbb C}^n$. In general it would be an interesting question to determine when will two non-isomorphic Lie groups be holomorphically isometric as Hermitian manifolds, but since our goal is primarily to find examples of Hermitian manifolds satisfying special geometric conditions, we will ignore this more subtle issue of group isomorphism here.

Next, let us denote by $\nabla$ the Chern connection, and by $T$, $R$ its torsion and curvature tensor. Then one has
\begin{equation*} \label{Gamma}
\nabla e_i = \sum_j \theta_{ij} e_j, \ \ \ \ \theta_{ij} = \sum_k \big( \Gamma^j_{ik} \varphi_k - \overline{\Gamma^i_{jk}}\, \overline{\varphi}_k \big), \ \ \ \ \ \Gamma^j_{ik} = D^j_{ik},
\end{equation*}
where $\{ \varphi_1, \ldots , \varphi_n\}$ is  the coframe dual to $e$. The torsion tensor $T$ of $\nabla$ has components
\begin{equation} \label{torsion}
T( e_i, \overline{e}_j)=0, \ \ \ \ T(e_i,e_j)  = \sum_k T^k_{ij}e_k, \ \ \ \  \ \ T^j_{ik}= - C^j_{ik} -  D^j_{ik} + D^j_{ki}.
\end{equation}
The only possibly non-zero components of $R$ are $R_{i\bar{j}k\bar{\ell}}= \langle R_{e_i\bar{e}_j} e_k, \bar{e}_{\ell} \rangle $, which are equal to
\begin{equation} \label{curvature}
R_{i\bar{j}k\bar{\ell}} = \sum_r \big( D^r_{ki}\overline{D^r_{\ell j}}  - D^{\ell}_{ri}\overline{D^k_{r j}} - D^j_{ri}\overline{D^k_{\ell r}} - \overline{D^i_{r j}} D^{\ell}_{kr} \big).
\end{equation}
Denote by $\omega = \sqrt{-1}\sum_i \varphi_i \wedge \overline{\varphi}_i$ the K\"ahler form of the metric.  Recall that {\em Gauduchon's torsion $1$-form} $\eta$ (\cite{Gau84}) is defined by $\partial \omega^{n-1} = - \eta \wedge \omega^{n-1}$. Its components are
\begin{equation} \label{eta}
\eta = \sum_i \eta_i \varphi_i, \ \ \ \ \ \ \eta_i = \sum_r T^r_{ri} = \sum_r D^r_{ir},
\end{equation}
where the last equality holds when ${\mathfrak g}$ is unimodular (\ref{unimo}). The structure equation takes the form:
\begin{equation} \label{structure}
d\varphi_i = -\frac{1}{2} \sum_{j,k} C^i_{jk} \,\varphi_j\wedge \varphi_k - \sum_{j,k} \overline{D^j_{ik}} \,\varphi_j \wedge \overline{\varphi}_k
\end{equation}
Differentiate the above, we get the  first Bianchi identity, which is equivalent to the Jacobi identity in this case:
\begin{eqnarray}
&& \sum_r \big( C^r_{ij}C^{\ell}_{rk} + C^r_{jk}C^{\ell}_{ri} + C^r_{ki}C^{\ell}_{rj} \big) \ = \ 0, \label{CC} \\
&& \sum_r \big( C^r_{ik}D^{\ell}_{jr} + D^r_{ji}D^{\ell}_{rk} - D^r_{jk}D^{\ell}_{ri} \big) \ = \ 0, \label{CD} \\
&& \sum_r \big( C^r_{ik}\overline{D^r_{j \ell}}  - C^j_{rk}\overline{D^i_{r \ell}} + C^j_{ri}\overline{D^k_{r \ell}} -  D^{\ell}_{ri}\overline{D^k_{j r}} +  D^{\ell}_{rk}\overline{D^i_{jr}}  \big) \ = \ 0 . \label{CDbar}
\end{eqnarray}
Next, denote by  $\nabla^b$ the Bismut connection of $g$, and let $\Gamma^{bj}_{ik} = \langle \nabla^b_{e_k} e_i, \overline{e}_j\rangle$ be the connection coefficients under the frame $e$. We have
\begin{equation*} \label{Bismut}
\Gamma^{bj}_{ik} = \Gamma^j_{ik} + T^j_{ik} = - C^j_{ik} + D^j_{ki}.
\end{equation*}
More generally, for any $t\in {\mathbb R}$, denote by $\nabla^{(t)}= (1-\frac{t}{2})\nabla + \frac{t}{2}\nabla^b$ the {\em  $t$-Gauduchon connection} (\cite{Gau97}). Its connection coefficients are given by
\begin{equation*} \label{t-Gauduchon}
\Gamma^{(t)j}_{ik} = \Gamma^j_{ik} + \frac{t}{2}T^j_{ik} = (1-\frac{t}{2})D^j_{ik} + \frac{t}{2} \big( - C^j_{ik} + D^j_{ki}\big).
\end{equation*}
The following observation should be well-known to experts, and we give a proof here for readers' convenience.
\begin{lemma} \label{lemma1}
Let ${\mathfrak g}$ be a unimodular Lie algebra with a Hermitian structure $(J,g)$, then $g$ is always Gauduchon, namely, $\partial \overline{\partial} \omega^{n-1} = 0$. Here $\omega$ is the K\"ahler form of $g$.
\end{lemma}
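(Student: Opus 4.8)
The plan is to show that Gauduchon's condition $\partial\overline{\partial}\,\omega^{n-1}=0$ follows directly from unimodularity by a degree/exactness argument, rather than by expanding everything in the structure constants. The key observation is that on a $2n$-dimensional Lie algebra, a left-invariant form of top degree $2n$ is exact (as a left-invariant form, i.e. in the Chevalley--Eilenberg complex) precisely when the Lie algebra is unimodular. The form $\partial\overline{\partial}\,\omega^{n-1}$ is a left-invariant real $(n,n)$-form, hence of top degree $2n$, so it is automatically a multiple of the volume form $\omega^n$. My target is therefore to show that this multiple vanishes, and I expect the cleanest route is to realize $\partial\overline{\partial}\,\omega^{n-1}$ as $d$ of something and then invoke the unimodularity criterion (\ref{unimo}).

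Concretely, first I would note that since $\partial\overline{\partial}\,\omega^{n-1}$ has bidegree $(n,n)$ and $\overline{\partial}\,\omega^{n-1}$ has bidegree $(n-1,n)$ with $\partial(\overline{\partial}\,\omega^{n-1})=d(\overline{\partial}\,\omega^{n-1})$ (because applying $\partial$ to an $(n-1,n)$-form and applying $\overline{\partial}$ both land only in $(n,n)$, and the $\overline{\partial}$ piece vanishes for degree reasons). Thus $\partial\overline{\partial}\,\omega^{n-1}=d\beta$ with $\beta=\overline{\partial}\,\omega^{n-1}$ a left-invariant $(2n-1)$-form. The statement then reduces to: \emph{for unimodular ${\mathfrak g}$, the differential $d$ of any left-invariant $(2n-1)$-form vanishes}, equivalently $H^{2n}_{dR}$ of the Chevalley--Eilenberg complex is nonzero / the top form is not exact. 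This is the standard fact that unimodularity is equivalent to the volume form being closed and non-exact in the invariant complex; I would cite or quickly verify it using (\ref{unimo}).

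Alternatively, and perhaps more in keeping with the paper's computational style, I would use Gauduchon's $1$-form $\eta$ from (\ref{eta}). By definition $\partial\omega^{n-1}=-\eta\wedge\omega^{n-1}$, so applying $\overline{\partial}$ gives $\overline{\partial}\,\partial\,\omega^{n-1}=-\overline{\partial}\eta\wedge\omega^{n-1}+\eta\wedge\overline{\partial}\,\omega^{n-1}$, and one reduces $\partial\overline{\partial}\,\omega^{n-1}$ to an expression governed by $\eta$ and $d\eta$. The point is that under unimodularity the formula (\ref{eta}) gives $\eta_i=\sum_r D^r_{ir}$, and the components $C^r_{ri}+D^r_{ri}=0$ from (\ref{unimo}) force the relevant contraction in $\partial\overline{\partial}\,\omega^{n-1}$ to collapse. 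I would pair $\partial\overline{\partial}\,\omega^{n-1}$ against the dual of $\omega^n$ and show the resulting scalar is a sum of $C^r_{ri}+D^r_{ri}$ terms, each zero by unimodularity.

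The main obstacle I anticipate is bookkeeping: correctly tracking the bidegree of each term and ensuring no sign errors when commuting $\partial$, $\overline{\partial}$ past the wedge powers of $\omega$, and then matching the final contraction to the exact combination $\sum_r(C^r_{ri}+D^r_{ri})$ appearing in (\ref{unimo}). The conceptual content is light—it is essentially the equivalence between unimodularity and Gauduchon-ness, which is folklore—so the real work is to present the computation cleanly enough that the cancellation is transparent. I would favor the exactness argument ($\partial\overline{\partial}\,\omega^{n-1}=d\beta$ plus top-degree invariant forms are non-exact iff unimodular) as the shortest path, relegating the explicit $\eta$-computation to a remark if a self-contained verification via the structure constants is desired.
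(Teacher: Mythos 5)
Your first (and preferred) route is correct and is genuinely different from the paper's. The paper computes directly with Gauduchon's torsion $1$-form: from $\partial\omega^{n-1}=-\eta\wedge\omega^{n-1}$ it derives $\partial\overline{\partial}\omega^{n-1}=(\overline{\partial}\eta+\eta\wedge\overline{\eta})\wedge\omega^{n-1}$, reduces the Gauduchon condition to the scalar identity $\chi=\sum_i\eta_{i,\bar i}=|\eta|^2$, and verifies this using $\Gamma^j_{ik}=D^j_{ik}$ together with (\ref{unimo}); a byproduct is the identity $\chi=|\eta|^2$, which the paper reuses later when discussing $\hat{s}=s-\chi$. Your argument instead observes that $\partial\overline{\partial}\omega^{n-1}=d\bigl(\overline{\partial}\omega^{n-1}\bigr)$ for bidegree reasons, so the claim reduces to the purely algebraic fact that on a unimodular Lie algebra the Chevalley--Eilenberg differential kills every invariant $(2n-1)$-form (write $\beta=\iota_X\mu$ for the volume form $\mu$, so $d\beta=-\mathrm{tr}(\mathrm{ad}_X)\,\mu$). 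This is shorter, avoids all index bookkeeping, and makes transparent that only unimodularity --- not any compatibility with $J$ --- is being used; what it does not give you is the auxiliary identity $\chi=|\eta|^2$. One small correction: in your opening paragraph you state that a top-degree invariant form is exact \emph{precisely when} the algebra is unimodular; it is the opposite (exact precisely when \emph{not} unimodular, equivalently non-exact in the unimodular case), and your second paragraph states it correctly, so this is only a slip of phrasing rather than a gap in the argument.
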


\begin{proof}
By the defining equation of $\eta$, one gets
$$\partial \overline{\partial} \omega^{n-1} = (\overline{\partial}\eta + \eta \wedge \overline{\eta})\wedge \omega^{n-1}.
$$
So $g$ is Gauduchon if and only if $\chi = |\eta|^2$, where $|\eta|^2=\sum_i |\eta_i|^2$ and $\chi =\sum_i \eta_{i,\bar{i}}$ under any unitary frame. Here and from now on the index after comma stands for covariant derivatives with respect to the Chern connection $\nabla$. Since $\Gamma^j_{ik}=D^j_{ik}$ and ${\mathfrak g}$ is unimodular, by (\ref{eta}) we have
$$ \chi = \sum_i \eta_{i,\bar{i}} = \sum_{i,r} \eta_r \overline{\Gamma^i_{ri}} = \sum_r \eta_r \overline{\eta_r} = |\eta|^2. $$
Therefore $g$ is always Gauduchon.
\end{proof}

Now let us discuss the geometric meaning of the constants $C$ and $D$. Note that $J$ makes ${\mathfrak g}$ a complex Lie algebra when and only when $D=0$, so the tensor $D$ in a way measures how far $({\mathfrak g},J)$ is deviated from being a complex Lie algebra. When $D=0$, we know from (\ref{curvature}) that $g$ is Chern flat. Note that the converse of this not true: if the Hermitian manifold $(G,J,g)$ is Chern flat, then $D$ might not be identically zero.

For the tensor $C$,  $C=0$ means $[Jx,Jy]=[x,y]$ for any $x,y\in {\mathfrak g}$. In this case $J$ is called an {\em abelian complex structure,} representing a rather special type of Lie-Hermitian manifolds. The condition $D=-C$ is equivalent to $g$ being a {\em bi-invariant metric} on $G$, and in this case the Bismut connection $\nabla^b$ (\cite{Bismut}) has coefficients $\Gamma^{b}=0$, hence $g$ is Bismut flat. A beautiful recent result of Lafuente and Stanfield \cite{LS}, settling a question raised in \cite{YZ1}, states that for any $t\neq 0,2$ if a Hermitian manifold has flat $t$-Gauduchon connection $\nabla^{(t)}$, then $g$ must be K\"ahler. In the special case of Lie-Hermitian manifolds, their result implies that

\begin{proposition}[\cite{LS}] \label{prop6}
For any $t\neq 0,2$, if $({\mathfrak g}, J,g)$ satisfies $\Gamma^{(t)} = 0$, then $g$ is K\"ahler, namely, $T=0$. In other words, if $t\neq 0,2$,  $D^j_{ik}=-D^j_{ki}$, and $C=\frac{2(1-t)}{t}D$, then $C=D=0$ thus ${\mathfrak g}$ is abelian.
\end{proposition}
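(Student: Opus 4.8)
The plan is to reduce the statement to the theorem of Lafuente and Stanfield \cite{LS}, the only genuine work being the bookkeeping that translates the vanishing of the connection coefficients into conditions on the structure constants $C$ and $D$. First I would unpack the hypothesis $\Gamma^{(t)}=0$. Using the expression for $\Gamma^{(t)j}_{ik}$ recorded in \S 2, this says that
\[
\Big(1-\tfrac{t}{2}\Big)D^j_{ik}-\tfrac{t}{2}C^j_{ik}+\tfrac{t}{2}D^j_{ki}=0
\]
for all $i,j,k$. Since $C^j_{ik}=-C^j_{ki}$, swapping $i$ and $k$ and adding the two equations cancels the $C$ term and leaves $D^j_{ik}+D^j_{ki}=0$, that is, $D^j_{ik}=-D^j_{ki}$. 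Substituting this back and using $t\neq 0$ gives $C^j_{ik}=\frac{2(1-t)}{t}D^j_{ik}$; these are exactly the two relations in the ``in other words'' form of the hypothesis.

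Next I would observe that $\Gamma^{(t)}=0$ means the left-invariant unitary frame $e$ is parallel with respect to the $t$-Gauduchon connection; as $\nabla^{(t)}$ is a Hermitian connection, the conjugate frame $\overline{e}$ is parallel as well, so the whole frame is $\nabla^{(t)}$-parallel and $\nabla^{(t)}$ is therefore flat. At this point I invoke the result of \cite{LS}: for $t\neq 0,2$ a Hermitian metric with flat $t$-Gauduchon connection must be K\"ahler, hence $T=0$. This is the substantive analytic input, and it is precisely here that the exclusion $t\neq 2$ is used.

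It then remains to translate $T=0$ back into the structure constants. From \eqref{torsion} and the two relations above,
\[
T^j_{ik}=-C^j_{ik}-D^j_{ik}+D^j_{ki}=-\frac{2(1-t)}{t}D^j_{ik}-2D^j_{ik}=-\frac{2}{t}\,D^j_{ik}.
\]
Since $t\neq 0$, $T=0$ forces $D=0$, and then $C=\frac{2(1-t)}{t}D=0$ as well; by \eqref{CandD2} every bracket among the $e_i$ and $\overline{e}_j$ vanishes, so $\mathfrak g$ is abelian. The main obstacle is not the bookkeeping but the fact that $t\neq 2$ cannot be removed and is invisible to this computation: at $t=2$ the same two relations hold, yet the conclusion fails, because a bi-invariant metric on a non-abelian compact group satisfies $\Gamma^{b}=\Gamma^{(2)}=0$ with $D\neq 0$. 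Thus a purely algebraic shortcut is doomed, and the argument must pass through the Lafuente--Stanfield theorem to supply the K\"ahler conclusion.
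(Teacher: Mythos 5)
Your proposal is correct and follows essentially the same route as the paper: invoke the Lafuente--Stanfield flatness theorem to get $T=0$, then combine $T=-C-2D$ with $C=\frac{2(1-t)}{t}D$ to force $D=0$ and hence $C=0$. The only addition is your explicit verification that $\Gamma^{(t)}=0$ is equivalent to the two stated relations on $C$ and $D$, which the paper asserts without computation but which checks out.
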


Note that for Lie-Hermitian manifolds, $T=0$ does not imply ${\mathfrak g}$ is abelian in general, as we mentioned before. But in this particular case, since $T=-C-D+\,^t\!D= -C-2D$ as $\,^t\!D=-D$, so when $T=0$ we get $C= -2D=-2(1-\frac{1}{t})D$, which leads to $D=0$ hence $C=0$. We remark that, the result of Lafuente and Stanfield is highly non-trivial even in the special case of Lie-Hermitian manifolds.

 Recall that a Hermitian metric $g$ is called {\em pluriclosed} (also called {\em strong K\"ahler with torsion}, or {\em SKT} in short in some literature), if its K\"ahler form $\omega$ satisfies $\partial \overline{\partial} \omega =0$. For Lie-Hermitian manifolds, one has

\begin{lemma} \label{lemma2}
Let $({\mathfrak g}, J,g)$ be any Lie algebra with a Hermitian structure. It holds that
\begin{equation*}
\sqrt{-1}\partial \overline{\partial} \omega = \sum_{i,j,k,\ell } \sum_r \left( -\frac{1}{2} T^r_{ik}  \overline{C^r_{j\ell }} - T^j_{ir} \overline{D^k_{r\ell }} + T^j_{kr} \overline{D^i_{r\ell}} \right) \varphi_i\varphi_k \overline{\varphi}_j \overline{\varphi}_{\ell}.
\end{equation*}
In particular, $g$ is pluriclosed if and only if
\begin{equation} \label{SKT}
\sum_r \left( - T^r_{ik}  \overline{C^r_{j\ell }} - T^j_{ir} \overline{D^k_{r\ell }} + T^j_{kr} \overline{D^i_{r\ell}}  +  T^{\ell}_{ir} \overline{D^k_{rj }} - T^{\ell}_{kr} \overline{D^i_{rj}}  \right) = 0
\end{equation}
for any $1\leq i<k \leq n$ and any $1\leq j< \ell \leq n$.
\end{lemma}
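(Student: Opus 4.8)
The plan is to compute $\sqrt{-1}\,\partial\overline{\partial}\omega$ directly from the structure equation (\ref{structure}) and then extract the pluriclosed condition by antisymmetrization. Since $J$ is integrable, $d=\partial+\overline{\partial}$ splits according to bidegree, so (\ref{structure}) already records both $\partial\varphi_i=-\tfrac12\sum_{j,k}C^i_{jk}\varphi_j\wedge\varphi_k$ and $\overline{\partial}\varphi_i=-\sum_{j,k}\overline{D^j_{ik}}\,\varphi_j\wedge\overline{\varphi}_k$; conjugating gives $\overline{\partial}\,\overline{\varphi}_i$ (governed by $\overline{C}$) and $\partial\overline{\varphi}_i$ (governed by $D$). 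First I would apply the Leibniz rule to $\omega=\sqrt{-1}\sum_m\varphi_m\wedge\overline{\varphi}_m$ to obtain $\overline{\partial}\omega$ as a $(1,2)$-form, and then apply $\partial$ to get the $(2,2)$-form $\partial\overline{\partial}\omega$, substituting the four derivatives above at each step.

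Expanding everything, $\partial\overline{\partial}\omega$ becomes a sum of monomials $\varphi_p\wedge\varphi_q\wedge\overline{\varphi}_a\wedge\overline{\varphi}_b$ whose coefficients are quadratic in the structure constants, falling into the four families $C\overline{C}$, $D\overline{C}$, $C\overline{D}$ and $D\overline{D}$. The $\overline{C}$-families arise from the $\overline{\partial}\,\overline{\varphi}$-branch of $\overline{\partial}\omega$ and the $\overline{D}$-families from the $\overline{\partial}\varphi$-branch. After relabeling the summation indices and using the antisymmetry $C^r_{ik}=-C^r_{ki}$ together with the antisymmetry of the wedge products, the key observation is that the combination $-C^r_{ik}-D^r_{ik}+D^r_{ki}$ that appears is precisely the Chern torsion $T^r_{ik}$ of (\ref{torsion}); consolidating the $\overline{C}$-terms into $T\,\overline{C}$ and the $\overline{D}$-terms into $T\,\overline{D}$ produces the stated closed form for $\sqrt{-1}\,\partial\overline{\partial}\omega$.

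For the pluriclosed statement, a $(2,2)$-form vanishes if and only if, for every $i<k$ and every $j<\ell$, the total coefficient of $\varphi_i\wedge\varphi_k\wedge\overline{\varphi}_j\wedge\overline{\varphi}_\ell$ is zero; since both $\varphi_i\wedge\varphi_k$ and $\overline{\varphi}_j\wedge\overline{\varphi}_\ell$ are antisymmetric, this coefficient equals the antisymmetrization of the coefficient in the formula under the swaps $i\leftrightarrow k$ and $j\leftrightarrow\ell$. In this antisymmetrization the term in $\overline{C^r_{j\ell}}$ is already antisymmetric in each pair (as $T$ is antisymmetric in its two lower indices and $C$ in its) and contributes without producing new terms, while the swap $i\leftrightarrow k$ merges the two $\overline{D}$-terms and the swap $j\leftrightarrow\ell$ generates the additional summands $T^{\ell}_{ir}\overline{D^k_{rj}}-T^{\ell}_{kr}\overline{D^i_{rj}}$; after dividing by the common numerical factor this is exactly (\ref{SKT}).

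The main obstacle is bookkeeping rather than any conceptual difficulty: one must track signs correctly through the repeated reordering of wedge factors, choose the relabelings of summation indices so that the a priori unrelated $C$- and $D$-terms can be recognized as the single tensor $T$, and perform the antisymmetrization without dropping or double-counting terms. I would organize the computation by keeping the $\overline{C}$-type and $\overline{D}$-type contributions separate throughout, since they never mix, and settle the numerical factors only at the very end.
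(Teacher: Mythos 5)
Your proposal is correct and follows essentially the same route as the paper: a direct expansion of $\partial\overline{\partial}\omega$ from the structure equation (\ref{structure}), followed by antisymmetrization of the coefficient of $\varphi_i\varphi_k\overline{\varphi}_j\overline{\varphi}_{\ell}$ in $j\leftrightarrow\ell$ to read off the pluriclosed condition. The only difference is organizational: the paper first records $\sqrt{-1}\,\partial\omega=-\sum_{i,j,k}T^j_{ik}\varphi_i\varphi_k\overline{\varphi}_j$ and then applies $d$ once more, so the torsion $T$ enters \emph{before} the second differentiation and only the products $T\overline{C}$ and $T\overline{D}$ ever appear, which spares the reassembly of your four $C\overline{C}$, $C\overline{D}$, $D\overline{C}$, $D\overline{D}$ families into $T$ at the end.
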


\begin{proof}
From the structure equation (\ref{structure}), we have
\begin{eqnarray*}
\sqrt{-1}\partial \omega  & = & - \sum_{i,j,k} T^j_{ik} \varphi_i\varphi_k\overline{\varphi}_j \\
\sqrt{-1}\partial \overline{\partial} \omega & = & - d ( \sqrt{-1}  \partial \omega) \ = \ \sum_{i,j,k} T^j_{ik} \left( (d(\varphi_i\varphi_k))^{2,1} \overline{\varphi}_j   + \varphi_i\varphi_k \overline{(d\varphi_j)^{2,0}} \right) \\
& = & \sum_{i,j,k} T^j_{ik} \left( \sum_{p,q} \big( \overline{D^p_{iq}} \varphi_p\varphi_k + \overline{D^p_{kq}} \varphi_i \varphi_p \big) \overline{\varphi}_q  \overline{\varphi}_j -\sum_{p,q} \frac{1}{2} \overline{C^j_{pq}} \varphi_i\varphi_k \overline{\varphi}_p \overline{\varphi}_q \right) \\
& = & \sum_{i,j,k,\ell } \sum_r \left( -\frac{1}{2} T^r_{ik}  \overline{C^r_{j\ell }} - T^j_{ir} \overline{D^k_{r\ell }} + T^j_{kr} \overline{D^i_{r\ell}} \right) \varphi_i\varphi_k \overline{\varphi}_j \overline{\varphi}_{\ell} \\
& = &  \frac{1}{2} \sum_{i,j,k,\ell } \sum_r \left( - T^r_{ik}  \overline{C^r_{j\ell }} - T^j_{ir} \overline{D^k_{r\ell }} + T^j_{kr} \overline{D^i_{r\ell}}  +  T^{\ell}_{ir} \overline{D^k_{rj }} - T^{\ell}_{kr} \overline{D^i_{rj}}  \right) \varphi_i\varphi_k \overline{\varphi}_j \overline{\varphi}_{\ell}.
\end{eqnarray*}
This completes the proof of the lemma.
\end{proof}

Let us denote by $\zeta_i = \sum_r D^r_{ri}$ under any unitary frame $e$. When ${\mathfrak g}$ is unimodular, we also have $\zeta_i=-\sum_r C^r_{ri}$. Define $\zeta = \sum_i \zeta_i \varphi_i$, where $\varphi$ is the coframe dual to $e$, then clearly $\zeta$ is independent of the choice of $e$, hence is a globally defined left-invariant $(1,0)$-form on $G$. By the structure equation (\ref{structure}), a straight-forward computation leads to

\begin{lemma} \label{lemma3}
Let $(G,J,g)$ be the universal cover of a Lie-Hermitian manifold. Then one has
\begin{equation*}
d (\varphi_1 \varphi_2 \cdots \varphi_n) = \overline{\zeta} \wedge \varphi_1 \varphi_2 \cdots \varphi_n.
\end{equation*}
where $\varphi$ is a unitary coframe of left-invariant $(1,0)$-forms. So when $\zeta =0$, the global $(n,0)$-form $\varphi_1 \varphi_2 \cdots \varphi_n$ is a nowhere zero holomorphic $n$-form, hence $g$ is Chern Ricci flat.
\end{lemma}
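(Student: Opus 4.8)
The plan is to compute $d(\varphi_1\wedge\cdots\wedge\varphi_n)$ directly from the structure equation (\ref{structure}) and to show that, for dimension reasons, only one family of terms survives. First I would expand by the Leibniz rule,
\[
d(\varphi_1\wedge\cdots\wedge\varphi_n)=\sum_{i=1}^n(-1)^{i-1}\,\varphi_1\wedge\cdots\wedge\varphi_{i-1}\wedge(d\varphi_i)\wedge\varphi_{i+1}\wedge\cdots\wedge\varphi_n,
\]
and substitute (\ref{structure}) for each $d\varphi_i$. The $C$-part of $d\varphi_i$ is a $(2,0)$-form $-\frac12\sum_{j,k}C^i_{jk}\varphi_j\wedge\varphi_k$; wedging it with the remaining $(n-1,0)$-form yields an $(n+1,0)$-form, which vanishes identically because $\mathfrak{g}^{1,0}$ has complex dimension $n$. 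Thus only the $D$-part $-\sum_{j,k}\overline{D^j_{ik}}\,\varphi_j\wedge\overline{\varphi}_k$ can contribute.

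Within the $D$-part, the factor $\varphi_j$ must coincide with $\varphi_i$ (that is, $j=i$), otherwise $\varphi_j$ repeats one of the surviving $(1,0)$-factors and the wedge vanishes. Hence the $i$-th summand reduces to $(-1)^{i-1}\big(-\sum_k\overline{D^i_{ik}}\big)\,\varphi_1\wedge\cdots\wedge\varphi_i\wedge\overline{\varphi}_k\wedge\varphi_{i+1}\wedge\cdots\wedge\varphi_n$. I would then transport $\overline{\varphi}_k$ to the far right past the $n-i$ factors $\varphi_{i+1},\ldots,\varphi_n$, producing a sign $(-1)^{n-i}$; combining the three signs $(-1)^{i-1}$, $(-1)$ and $(-1)^{n-i}$ gives a net $(-1)^n$ independent of $i$. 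Summing over $i$ and using $\sum_i D^i_{ik}=\zeta_k$ yields $d(\varphi_1\wedge\cdots\wedge\varphi_n)=(-1)^n(\varphi_1\wedge\cdots\wedge\varphi_n)\wedge\overline{\zeta}$, which equals $\overline{\zeta}\wedge\varphi_1\wedge\cdots\wedge\varphi_n$ since commuting the $(0,1)$-form $\overline{\zeta}$ past the $(n,0)$-form contributes exactly the offsetting factor $(-1)^n$. This is the asserted identity.

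For the final assertion, when $\zeta=0$ the identity gives $d\Phi=0$ for $\Phi=\varphi_1\wedge\cdots\wedge\varphi_n$; since $\partial\Phi$ is an $(n+1,0)$-form and hence zero, this is equivalent to $\overline{\partial}\Phi=0$, so $\Phi$ is a holomorphic section of the canonical bundle. It is nowhere vanishing because $\varphi$ is a coframe, and because $\varphi$ is unitary its induced pointwise norm $|\Phi|_h$ is the constant $1$. The first Chern Ricci form is, up to sign, the curvature $\sqrt{-1}\,\partial\overline{\partial}\log|\Phi|_h^2$ of the Chern metric that $g$ induces on the canonical bundle, computed with respect to the holomorphic trivializing section $\Phi$; since $\log|\Phi|_h^2\equiv 0$, this curvature vanishes and $g$ is Chern Ricci flat. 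The computation is entirely routine, and the only point demanding care is the sign bookkeeping in the second paragraph, where the Leibniz sign, the structure-equation sign, and the transport sign must be combined so that the $i$-dependence cancels and the sum reassembles cleanly into $\overline{\zeta}$.
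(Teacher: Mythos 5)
Your computation is correct, and it is precisely the "straight-forward computation" from the structure equation (\ref{structure}) that the paper invokes without writing out: the $(2,0)$-part of $d\varphi_i$ dies for dimension reasons, only the $j=i$ terms of the $(1,1)$-part survive, and the signs cancel to reassemble $\overline{\zeta}$. The concluding argument for Chern Ricci flatness via the constant-norm holomorphic trivialization of the canonical bundle is the standard one the lemma intends (the paper also notes the alternative direct check from (\ref{Ric1})), so there is nothing to add.
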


Conversely, if $G$ admits a global nowhere zero holomorphic $n$-form, then $\zeta = \partial f$ for some smooth function $f$ on $G$, namely, $\zeta $ is $\partial$-exact.

By taking trace of the Chern curvature tensor $R$ (see (\ref{curvature})) in three different ways, one gets the {\em first, second, and third Chern Ricci} curvature tensors:
\begin{eqnarray}
Ric^{(1)}_{i\bar{j}} & = & \sum_r R_{i\bar{j}r\bar{r}} \ = \ -\sum_r \big( \zeta_r \overline{D^i_{rj}} + \overline{\zeta}_r D^j_{ri} \big) ,  \label{Ric1}\\
Ric^{(2)}_{i\bar{j}} & = & \sum_r R_{r\bar{r}i\bar{j}} \ = \ \sum_{r,s}\big( D^r_{is}\overline{D^r_{js}} - D^j_{rs}\overline{D^i_{rs}} \big) - \sum_r \big( \eta_r \overline{D^i_{jr}} + \overline{\eta}_r D^j_{ir}\big), \label{Ric2} \\
Ric^{(3)}_{i\bar{j}} & = & \sum_r R_{r\bar{j}i\bar{r}} \ = \ - \sum_{r,s} D^j_{rs}\overline{D^i_{sr}}  - \sum_r  \eta_r \overline{D^i_{rj}} . \label{Ric3}
\end{eqnarray}
From the first equation, we could also see that $\zeta =0$ implies $Ric^{(1)}=0$. In the second and third line, we assumed that ${\mathfrak g}$ is unimodular, so $\eta_i=\sum_r D^r_{ir}$. By taking trace again on these Ricci tensors, one gets the {\em Chern scalar curvature} $s$ and {\em altered scalar curvature} $\hat{s}$:
\begin{eqnarray}
s & = &  \sum_i Ric^{(1)}_{i\bar{i}} \  = \ \sum_i Ric^{(2)}_{i\bar{i}} \ = \ - \sum_i \big( \zeta_i \overline{\eta}_i + \overline{\zeta}_i \eta_i \big), \label{s} \\
\hat{s} & = & \sum_i Ric^{(3)}_{i\bar{i}} \  = \ - \sum_{r,s,t} D^t_{rs} \overline{D^t_{sr}} - |\eta|^2. \label{shat}
\end{eqnarray}
Note that on any Hermitian manifolds, $\hat{s}=s-\chi$ always holds, and $\hat{s}$ and $\chi$ are both real. For Lie-Hermitian manifolds, we have $|\eta|^2=\chi$, and by (\ref{CDbar}) one gets
$$ \sum_{r,s} D^j_{rs} \overline{D^i_{sr}} = \sum_r \big( \zeta_r \overline{D^i_{rj}} + \overline{\zeta}_r D^j_{ri} \big) , \ \ \ \ \ \forall \ i,j. $$
Thus $\hat{s}=s-\chi$ holds, and $ s = - \sum_{r,s,t} D^t_{rs} \overline{D^t_{sr}}$. This last equality leads to the following

\begin{lemma} \label{lemma4}
If a Lie-Hermitian manifold $M^n$ has symmetric $D$ tensor, namely,  $D^j_{ik}=D^j_{ki}$ for any $i,j,k$,  then $s= - \sum_{r,s,t} |D^t_{rs}|^2 \leq 0$, and $s<0$ unless $D=0$. Similarly, if $D$ is skew-symmetric: $D^j_{ik}=-D^j_{ki}$, then $s= \sum_{r,s,t} |D^t_{rs}|^2 \geq 0$, and $s>0$ unless $D=0$. Note that when $s>0$ the Kodaira dimension of $M^n$ is necessarily $-\infty$.
\end{lemma}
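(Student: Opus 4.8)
The plan is to read both inequalities straight off the identity $s = -\sum_{r,s,t} D^t_{rs}\,\overline{D^t_{sr}}$ derived immediately above the statement; since a Lie-Hermitian manifold is forced to be unimodular, that identity is available (its derivation used the Jacobi identity (\ref{CDbar}) together with $\eta_i=\sum_r D^r_{ir}$ and $|\eta|^2=\chi$, both of which require unimodularity). The first two assertions are then pure substitution. In the symmetric case $D^t_{sr}=D^t_{rs}$ gives $\overline{D^t_{sr}}=\overline{D^t_{rs}}$, so the identity collapses to $s = -\sum_{r,s,t}|D^t_{rs}|^2\le 0$, with equality exactly when every $D^t_{rs}=0$, i.e. $D=0$; in the skew-symmetric case the same substitution produces a sign, $\overline{D^t_{sr}}=-\overline{D^t_{rs}}$, whence $s=\sum_{r,s,t}|D^t_{rs}|^2\ge 0$, again with equality iff $D=0$. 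No curvature computation beyond this single line is needed, and this is not where the difficulty lies.

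The substantive part is the closing remark on Kodaira dimension, and that is where I would concentrate. Because $g$ is left-invariant the scalar curvature $s$ is constant, so $s>0$ means $\int_M s\,dV_g>0$. By Lemma \ref{lemma1} the metric is Gauduchon, so the Gauduchon degree $\deg_g$ is a well-defined functional on $\mathrm{Pic}(M)$: changing the fiber metric on a line bundle alters its Chern curvature by a $\partial\overline{\partial}$-exact form, which integrates to zero against $\omega^{n-1}$ precisely because $\partial\overline{\partial}\omega^{n-1}=0$. The integrated Chern scalar curvature is a positive multiple of $\deg_g(K_M^{-1})$, the degree of the anticanonical bundle $\det T^{1,0}M$ with its induced metric, so $s>0$ forces $\deg_g(K_M)<0$ and hence $\deg_g(K_M^{\otimes m})<0$ for every $m\ge 1$. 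Since a holomorphic line bundle admitting a nonzero holomorphic section has nonnegative Gauduchon degree, all plurigenera vanish, i.e. $H^0(M,K_M^{\otimes m})=0$ for all $m\ge 1$, and therefore $\kappa(M)=-\infty$.

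The main obstacle is thus the bookkeeping in this last step rather than any analysis: I would need to pin down the induced metric on $\det T^{1,0}M$ so that the proportionality between $\int_M s\,dV_g$ and $\deg_g(K_M^{-1})$ is genuinely positive. Fortunately the statement only claims $\kappa=-\infty$, so the exact constant is irrelevant and only its sign matters; combined with the standard vanishing property of sections of negative-degree bundles over a Gauduchon manifold, this completes the argument.
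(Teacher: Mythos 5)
Your proof is correct and follows the same route as the paper: the paper states the lemma as an immediate consequence of the identity $s=-\sum_{r,s,t}D^t_{rs}\overline{D^t_{sr}}$ established just above it, and your substitution of the (skew-)symmetry of $D$ in the lower indices is exactly that deduction. The closing remark on Kodaira dimension is left unproved in the paper; your Gauduchon-degree argument is the standard justification and is sound.
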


Finally, let us close this section by recalling the covariant differentiation formula for Chern torsion:
\begin{eqnarray}
T^j_{ik,\ell} & = &  \sum_r \left( - T^j_{rk} \Gamma^r_{i\ell} -  T^j_{ir} \Gamma^r_{k\ell} + T^r_{ik} \Gamma^j_{r\ell}   \right)   \label{Tderivative}\\
T^j_{ik,\bar{\ell} } & = & \sum_r \left(  T^j_{rk} \overline{\Gamma^i_{r\ell}} +  T^j_{ir} \overline{\Gamma^k_{r\ell}}  - T^r_{ik} \overline{\Gamma^r_{j\ell}}   \right)  \label{Tderivativebar}
\end{eqnarray}
for any $1\leq i,j,k,\ell \leq n$, where index after comma stands for covariant derivatives with respect to the Chern connection $\nabla$. Of course when $\nabla$ is replaced by another Hermitian connection, then the same differentiation formula holds when $\Gamma$ is replaced by the connection coefficients of that connection.

\vspace{0.3cm}

\section{Almost abelian Lie algebras}

In this section let us specialize to Lie-Hermitian manifolds where ${\mathfrak g}$ is an {\em almost abelian Lie algebra}, namely, when ${\mathfrak g}$ is non-abelian but contains an abelian ideal of codimension one.

An equivalent definition for this is a non-abelian Lie algebra ${\mathfrak g}$ which contains an abelian subalgebra of codimension one. In this case, ${\mathfrak g}$ always contains an abelian ideal of codimension one by \cite[Prop. 3.1]{BC}.

Let ${\mathfrak g}$ be an almost abelian Lie algebra, with ${\mathfrak a} \subseteq {\mathfrak g}$ an abelian ideal of codimension one. Assume that $(J,g)$ is an Hermitian structure on ${\mathfrak g}$, with $\dim {\mathfrak g}=2n$. Since  ${\mathfrak a}_J:={\mathfrak a}\cap J {\mathfrak a}$ is the intersection of two hyperplanes and is $J$-invariant, it is necessarily of codimension $2$ in ${\mathfrak g}$. Thus we can choose a unitary basis $\{ e_1, \ldots , e_n\}$ of ${\mathfrak g}^{1,0}$ so that ${\mathfrak a}$ is spanned by
$$ {\mathfrak a} = \mbox{span}_{\mathbb R}\{ \sqrt{-1}(e_1-\overline{e}_1); \ (e_i+\overline{e}_i), \, \sqrt{-1}(e_i-\overline{e}_i); \ 2\leq i\leq n \} .$$
We will call such a unitary basis an {\em admissible frame}, and we will choose $e$ to be admissible from now on. Since ${\mathfrak a}$ is abelian, we get
$$ [e_i, e_j]= [ e_i, \overline{e}_j] = [e_1-\overline{e}_1, e_i]= 0, \ \ \ \ \forall \ 2\leq i,j\leq n.$$
From this and (\ref{CandD2}) we deduce that
\begin{equation*}
C^{\ast}_{ij} = D^j_{\ast i} = D^1_{\ast i} = C^{\ast}_{1i}+\overline{D^i_{\ast 1}} =0, \ \ \ \ \forall \ 2\leq i,j\leq n.
\end{equation*}
Since ${\mathfrak a}$ is an ideal, $[e_1+\overline{e}_1, {\mathfrak a} ] \subseteq {\mathfrak a}$, which leads to $C^1_{1i}=0$ and $D^1_{11} = \overline{D^1_{11}}$. Putting these together, we know that the only possibly non-trivial components of $C$ and $D$ are
\begin{equation} \label{CandD-aala}
D^1_{11}=\lambda \in {\mathbb R}, \ \ \ D^1_{i1}=v_i \in {\mathbb C}, \ \ \ D^j_{i1}=A_{ij}, \ \ \ C^{j}_{1i} = - \overline{A_{ji}}, \ \ \ \ \ \  2\leq i,j\leq n.
\end{equation}
So the only algebraic data for the structure constants are $\lambda \in {\mathbb R}$, a complex column vector $v\in {\mathbb C}^{n-1}$, and a complex $(n-1)\times (n-1)$ matrix $A=(A_{ij})_{2\leq i,j\leq n}$. In terms of the dual coframe $\varphi$, the structure equation (\ref{structure}) now takes the following form:

\begin{lemma} \label{lemma5}
Let ${\mathfrak g}$ be an almost abelian Lie algebra with a Hermitian structure $(J,g)$. Then there exists a unitary coframe $\varphi$ under which the structure equation is
\begin{equation*}
\left\{ \begin{split}  d\varphi_1 = - \lambda \,\varphi_1\wedge \overline{\varphi}_1 , \hspace{5.8cm} \\ d\varphi_i = - \overline{v}_i \, \varphi_1\wedge \overline{\varphi}_1 +  \sum_{j=2}^n \overline{A_{ij}} \,(\varphi_1 + \overline{\varphi}_1)\wedge \varphi_j , \ \ \ 2\leq i\leq n. \end{split} \right.
\end{equation*}
\end{lemma}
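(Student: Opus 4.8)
The plan is to prove the lemma by direct substitution of the structure constants (\ref{CandD-aala}) into the general structure equation (\ref{structure}), since the hard work---pinning down which components of $C$ and $D$ can be nonzero---has already been done. Recall that the only possibly nonzero constants are $D^1_{11}=\lambda\in\mathbb{C}$ with $\lambda$ real, $D^1_{i1}=v_i$, $D^j_{i1}=A_{ij}$, and $C^j_{1i}=-\overline{A_{ji}}$ (together with its antisymmetric partner $C^j_{i1}=\overline{A_{ji}}$), all for $2\leq i,j\leq n$. Thus the two sums in (\ref{structure}) collapse to a handful of terms once the free index is fixed.

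First I would compute $d\varphi_1$. In the $C$-sum every surviving term would need upper index $1$, but no nonzero $C^1_{jk}$ exists, so that sum vanishes; in the $D$-sum only $D^1_{11}=\lambda$ survives, giving $-\overline{\lambda}\,\varphi_1\wedge\overline{\varphi}_1 = -\lambda\,\varphi_1\wedge\overline{\varphi}_1$ because $\lambda$ is real. This is exactly the first line.

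Next, for $2\leq i\leq n$ I would treat the two sums separately. In the holomorphic ($C$) sum the surviving terms come from the pairs $(j,k)=(1,m)$ and $(m,1)$, giving $C^i_{1m}=-\overline{A_{im}}$ and $C^i_{m1}=\overline{A_{im}}$; after using the antisymmetry of $\varphi_1\wedge\varphi_m$, the factor $-\tfrac12$ and the two opposite-sign contributions combine into $\sum_{m=2}^n \overline{A_{im}}\,\varphi_1\wedge\varphi_m$. In the $D$-sum, every nonzero $D^j_{i1}$ has last subscript $1$, so that sum becomes $-\overline{v}_i\,\varphi_1\wedge\overline{\varphi}_1 - \sum_{j=2}^n\overline{A_{ij}}\,\varphi_j\wedge\overline{\varphi}_1$; reordering $\varphi_j\wedge\overline{\varphi}_1 = -\overline{\varphi}_1\wedge\varphi_j$ turns the last piece into $+\sum_j\overline{A_{ij}}\,\overline{\varphi}_1\wedge\varphi_j$. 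Adding the two $A$-contributions merges $\varphi_1\wedge\varphi_j$ and $\overline{\varphi}_1\wedge\varphi_j$ into $(\varphi_1+\overline{\varphi}_1)\wedge\varphi_j$, and together with the $v$-term this yields the second line.

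There is no genuine obstacle here; the only thing that requires care is bookkeeping---correctly enumerating the nonzero structure constants after fixing the free index, tracking the combinatorial factor $\tfrac12$ against the antisymmetry of the wedge product in the $C$-sum, and being consistent with the sign produced by reordering $\varphi_j\wedge\overline{\varphi}_1$ so that the two occurrences of $\overline{A_{ij}}$ assemble into the clean factor $(\varphi_1+\overline{\varphi}_1)$. Reality of $\lambda$ is what removes the conjugation bar from the leading coefficient, completing the identification of both lines.
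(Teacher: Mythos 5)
Your proposal is correct and follows exactly the route the paper takes: the paper derives the list of possibly nonzero structure constants in (\ref{CandD-aala}) from the admissible-frame construction and then obtains Lemma \ref{lemma5} by substituting them into the general structure equation (\ref{structure}), which is precisely the substitution you carry out (the paper merely leaves the bookkeeping implicit). Your sign and factor-of-$\tfrac12$ accounting in the $C$-sum and the reordering of $\varphi_j\wedge\overline{\varphi}_1$ both check out.
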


This is well-known to experts studying almost abelian Lie algebras and is stated as (\ref{structure1}) in \S 1.
Clearly, $d^2\varphi_1=d^2\varphi_i=0$, so (\ref{CC}) - (\ref{CDbar}) do not impose further restriction on $\lambda$, $v$, or $A$. These three terms are not all zero, as ${\mathfrak g}$ is assumed to be non-abelian.

The following properties about Hermitian structures on almost abelian Lie algebras are discovered by various people and we summarize into a proposition, and include a proof here for readers' convenience. Item (iii) was discovered by Lauret and Will \cite{LW}, item (iv) was studied extensively by Fino and Paradiso \cite{FP1}, \cite{FP2}, and item (v) was proved by Arroyo and Lafuente \cite{AL}.

\begin{proposition} \label{propAL}
Let ${\mathfrak g}$ be an almost abelian Lie algebra with Hermitian structure $(J,g)$. Then there exists unitary coframe with structure equation (\ref{structure1}) and the following holds:
\begin{enumerate}
\item  ${\mathfrak g}$ is nilpotent $\ \Longleftrightarrow \ $ $\lambda =0$ and $A^{n-1}=0$;
\item   ${\mathfrak g}$ is unimodular $\ \Longleftrightarrow \ $ $\lambda + \mbox{tr}(A) + \overline{\mbox{tr}(A)} = 0$;
\item $g$ is K\"ahler  $\ \Longleftrightarrow \ $ $v=0$ and $A+A^{\ast}=0$.
\item $g$ is balanced  $\ \Longleftrightarrow \ $ $v=0$ and $\mbox{tr}(A) + \overline{\mbox{tr}(A)}=0$.
\item $g$ is pluriclosed  $\ \Longleftrightarrow \ $ $\lambda (A+A^{\ast}) + (A+A^{\ast})^2 + [A^{\ast}, A]=0$ \
$\ \Longleftrightarrow \ $ $[A,A^{\ast}]=0$ and the real part of each eigenvalue of $A$ is either $0$ or $-\frac{\lambda}{2}$.
 \end{enumerate}
\end{proposition}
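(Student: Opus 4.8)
The plan is to read off all the Chern structure constants from (\ref{CandD-aala}) and to compute the Chern torsion once and for all; every item then becomes a short consequence. Substituting (\ref{CandD-aala}) into the torsion formula (\ref{torsion}) and using the antisymmetry $C^j_{i1}=-C^j_{1i}=\overline{A_{ji}}$, I expect to find that the only nonvanishing components are
\begin{equation*}
T^1_{1k}=v_k \quad (k\geq 2), \qquad T^j_{1k}=(A+A^{\ast})_{kj}\quad (j,k\geq 2),
\end{equation*}
together with their antisymmetric partners $T^\bullet_{k1}=-T^\bullet_{1k}$; all components $T^j_{ik}$ with $i,k\geq 2$ vanish because every nonzero $C$ carries a lower index $1$ in its first slot and every nonzero $D$ carries a lower index $1$ in its second slot. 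This single computation is the engine for the whole proposition.

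With the torsion in hand, items (iii) and (iv) are immediate. For (iii), $g$ is K\"ahler exactly when $T=0$, and the display above shows this holds iff $v=0$ and $A+A^{\ast}=0$. For (iv) I compute Gauduchon's form from (\ref{eta}): $\eta_1=\sum_r T^r_{r1}=-\operatorname{tr}(A+A^{\ast})=-(\operatorname{tr}A+\overline{\operatorname{tr}A})$ and $\eta_i=T^1_{1i}=v_i$ for $i\geq 2$, so $g$ is balanced ($\eta=0$) iff $v=0$ and $\operatorname{tr}A+\overline{\operatorname{tr}A}=0$. For (ii) I feed (\ref{CandD-aala}) into the unimodularity criterion (\ref{unimo}); only the index $i=1$ gives a nontrivial equation, namely $\sum_r(C^r_{r1}+D^r_{r1})=\overline{\operatorname{tr}A}+(\lambda+\operatorname{tr}A)=0$, which is the stated condition. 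For (i), recalling that $[\mathfrak g,\mathfrak g]\subseteq\mathfrak a$ so that nilpotency of $\mathfrak g$ is equivalent to nilpotency of $\operatorname{ad}_X|_{\mathfrak a}$ for the generator $X=e_1+\overline{e}_1$, I would compute this adjoint map on the complexification $\mathfrak a^{\mathbb C}=\operatorname{span}\{e_1-\overline e_1;\,e_i,\overline e_i\,(i\geq2)\}$; it turns out to be block-triangular with diagonal blocks $-2\lambda$, $-2\overline A$, $-2A$, so it is nilpotent iff $\lambda=0$ and $A$ is nilpotent, i.e. $A^{n-1}=0$.

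The substantive item is (v). The first task is to derive the matrix equation $\lambda(A+A^{\ast})+(A+A^{\ast})^2+[A^{\ast},A]=0$ from the pluriclosed criterion (\ref{SKT}) of Lemma \ref{lemma2}. Because the nonzero $T,C,D$ all carry a distinguished index $1$, only the index choices $i=1$ (with $k\geq 2$) survive, and collecting the coefficients of $\varphi_1\varphi_k\overline\varphi_j\overline\varphi_\ell$ should produce exactly the $(n-1)\times(n-1)$ matrix identity above. This is careful but purely mechanical bookkeeping.

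The second task, which is the genuine obstacle, is the purely algebraic equivalence of that identity with ``$[A,A^{\ast}]=0$ and every eigenvalue of $A$ has real part $0$ or $-\tfrac\lambda2$.'' Writing $A=S+W$ with $S=\tfrac12(A+A^{\ast})$ Hermitian and $W=\tfrac12(A-A^{\ast})$ skew-Hermitian, one has $[A^{\ast},A]=2[S,W]$, and the identity collapses to $[S,W]=-S(\lambda I+2S)$. The key trick is that $\operatorname{tr}\!\big([S,W]S^{k}\big)=0$ for every $k\geq 0$; feeding in the identity gives $\sum_i s_i^{\,k+1}(\lambda+2s_i)=0$ for all $k$, where $s_i$ are the (real) eigenvalues of $S$, and a Vandermonde argument over the distinct eigenvalues forces each to satisfy $s_i(\lambda+2s_i)=0$. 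Hence $\lambda S+2S^2=0$, so $[S,W]=0$ (that is, $A$ is normal) and the eigenvalues of $S=\operatorname{Re}A$ lie in $\{0,-\tfrac\lambda2\}$; the converse is immediate by unitarily diagonalizing the normal matrix $A$. I expect this trace-power/Vandermonde step to be the crux, since the forward implication must manufacture normality of $A$ out of a single quadratic matrix identity.
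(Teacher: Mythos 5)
Your proposal is correct, and items (i)--(iv) together with the derivation of the matrix identity in (v) follow essentially the same route as the paper: the same reading of (\ref{CandD-aala}), the same torsion components $T^1_{1k}=v_k$ and $T^j_{1k}=(A+A^{\ast})_{kj}$, the same reduction of (\ref{SKT}) to the case $i=j=1$ (you only mention $i=1$, but the surviving monomials indeed also force $j=1$), and the same $\mathrm{ad}_{e_1+\overline{e}_1}$ computation for nilpotency. The one place where you genuinely diverge is the algebraic equivalence in (v). The paper sets $X=A+A^{\ast}$, rewrites the identity as $[A,X]=\lambda X+X^{2}$, unitarily diagonalizes the Hermitian matrix $X$, observes that the off-diagonal blocks of the equation force $A$ to be block-diagonal along the eigenspaces of $X$, and then takes the trace within each block to conclude that each eigenvalue of $X$ is $0$ or $-\lambda$, whence $[A,X]=0$. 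You instead decompose $A=S+W$ into Hermitian and skew-Hermitian parts, rewrite the identity as $[S,W]=-S(\lambda I+2S)$, and exploit $\mathrm{tr}\bigl([S,W]S^{k}\bigr)=0$ together with a Vandermonde argument on the distinct nonzero eigenvalues of $S$ to force $S(\lambda I+2S)=0$. Both arguments are correct and of comparable length; the paper's version has the small advantage of exhibiting the block structure of $A$ explicitly (which it reuses later when normal forms are needed), while your trace-power argument is coordinate-free and avoids choosing a diagonalizing frame until the very end.
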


\begin{proof}
Let $e$ be an admissible frame. The components of $C$ and $D$ are given by the real constant $\lambda$, complex column vector $v$, and complex square matrix $A$, with structure equation as (\ref{structure1}). Since $ad_{e_1+\overline{e}_1}$ corresponds to the matrix
$$ \widetilde{A}:= \left[ \begin{array}{ll} \lambda & 0 \\ v & A \end{array} \right] , $$
we know that ${\mathfrak g}$ will be nilpotent if and only if $\lambda =0$ and $A$ is nilpotent (i.e., $A^{n-1}=0$). For any $i>1$, we have $\sum_{r=1}^n (C^r_{ri} + D^r_{ri}) = 0$, while
$$ \sum_{r=1}^n (C^r_{r1} + D^r_{r1}) = \lambda + \mbox{tr}(A) + \overline{\mbox{tr}(A)},$$
so ${\mathfrak g}$ is unimodular if and only if $\lambda + \mbox{tr}(A) + \overline{\mbox{tr}(A)} =0$. By (\ref{torsion}), the Chern torsion components are given by
\begin{equation} \label{torsion-aala}
 T^{\ast}_{ij}=0, \ \ \ T^1_{1i}=v_i, \ \ \ T^j_{1i}= A_{ij} + \overline{A_{ji} } , \ \ \ \ \ 2\leq i,j\leq n.
 \end{equation}
Thus the metric is K\"ahler if and only if $v=0$ and $A^{\ast} = -A$. As $\eta_a = \sum_r T^r_{ra}$, we have $\eta_i=v_i$ for $i>1$ and $\eta_1=- (\mbox{tr}(A) + \overline{\mbox{tr}(A)})$. So the metric $g$ is balanced if and only if $v=0$ and $\mbox{tr}(A) + \overline{\mbox{tr}(A)}=0$. For (v), the metric $g$ is pluriclosed if and only if equation (\ref{SKT}) holds for any $1\leq i<k\leq n$ and any $1\leq j< \ell \leq n$. Since $D^{\ast}_{\ast p}=T^{\ast}_{pq}=D^p_{11}=0$ for any $p,q>1$, one just needs to check the $j=1$ and $i=1$ case of (\ref{SKT}), which is
$$  -\sum_{r>1} T^r_{1k}\overline{C^r_{1\ell}} + \sum_{r>1} T^{\ell}_{1r}\overline{D^k_{r1}} - T^{\ell}_{k1}\overline{D^1_{11}} = \sum_{r>1} \{ T^r_{1k}D^{\ell}_{r1} + \overline{D^k_{r1}} T^{\ell}_{1r} \} + \lambda T^{\ell}_{1k} = 0, \ \ \ \forall \ k,\ell >1. $$
In other words, the metric $g$ is pluriclosed if and only if
\begin{equation} \label{Lafuente}
(A+A^{\ast})A + A^{\ast} (A+ A^{\ast}) + \lambda (A+A^{\ast}) = 0.
\end{equation}
This is discovered by Arroyo and Lafuente in \cite{AL} and they also showed that it is equivalent to:
\begin{equation} \label{Lafuente2} [A, A^{\ast }]=0, \ \ \ \mbox{and the real part of each eigenvalue of} \ A \ \mbox{is either} \ 0 \ \mbox{or} \ -\frac{\lambda}{2}.
\end{equation}
To see this equivalence, write $X=A+A^{\ast}$, then (\ref{Lafuente}) says that $[A, X] = \lambda X + X^2$. Since $X$ is Hermitian, one can take a unitary basis change of $\{ e_2, \ldots , e_n\}$ to make it into a real diagonal matrix. Group the same diagonal values into blocks, then the equation implies that $A$ is block-diagonal, and within each block, $X$ is a constant multiple (say $\lambda_i$) of the identity. By taking trace in this block, we know that the constant $\lambda_i$ must be either $0$ or $-\lambda$, and $[A,X]=0$. Thus $[A,A^{\ast}]=0$ which means $A$ is normal hence diagonalizable by unitary similarity. Note that $\lambda_i$ is twice the real part of an eigenvalue of $A$, so the assertion (\ref{Lafuente2}) holds. Thus (\ref{Lafuente}) implies (\ref{Lafuente2}). The converse is certainly true as well. This establishes the equivalence of (v).
\end{proof}

From the above, we see that unimodular plus balanced means that $\lambda =0$, $v=0$, while $A$ can be any complex matrix with trace in $i{\mathbb R}$. By Schur's triangulation theorem, $A$ is unitary similar to an upper triangular matrix. Analogously, unimodular plus pluriclosed means that the real part of only one eigenvalue of $A$ is $-\frac{\lambda}{2}$ while the others are zero, so after unitary similarity, $A$ is equal to a diagonal matrix with diagonal entries $\{ ib_2, \ldots , ib_{n\!-\!1}, ib_n\!-\!\frac{\lambda}{2}\}$, with all $b_j\in {\mathbb R}$, while $\lambda \in {\mathbb R}$ and $v\in {\mathbb C}^{n\!-\!1}$ are arbitrary.

In \cite{Paradiso}, Paradiso characterized all Hermitian almost abelian Lie algebra that are {\em locally conformally balanced}, and in \cite{DFFY}, Djebbar, Ferreira, Fino, and Youcef give characterization of all {\em locally conformally pluriclosed} ones.

Note that since we are primarily interested in compact examples, we need the Lie group $G$ to admit a co-compact lattice which  might be difficult to find for general solvable groups. But in the almost ableian case there is a nice sufficient condition given by Bock \cite[Prop 2.1]{Bock} (see also \S 4 of \cite{DFFY} and \cite{AO}, \cite{CM}).

Next let us look at the Chern curvature of an almost abelian Lie algebra. By (\ref{curvature}) and (\ref{CandD-aala}), we deduce that $R_{i\bar{\ast}\ast\bar{\ast}}=0$ and
\begin{equation*}
R_{1\bar{1}1\bar{1}} = -2\lambda^2-|v|^2, \ \ \ (R_{1\bar{1}i\bar{1}})=-A^{\ast}v, \ \ \ (R_{1\bar{1}i\bar{j}})=vv^{\ast} +[A, A^{\ast}] - \lambda (A + A^{\ast}),
\end{equation*}
for any $i,j>1$. By (\ref{shat}) and (\ref{CandD-aala}), we get
$$\hat{s}=-\lambda^2 -|\eta|^2 = -\lambda^2 - |v|^2 -(\mbox{tr}(A) + \overline{\mbox{tr}(A)})^2.$$
From these formula we conclude that

\begin{lemma} \label{lemma6}
Let ${\mathfrak g}$ be an almost abelian Lie algebra with a Hermitian structure $(J,g)$. Then it is Chern flat if and only if $\lambda=0$, $v=0$, and $[A, A^{\ast}]=0$. It has altered scalar curvature $\hat{s}=0$ if and only if $\lambda=0$, $v=0$ and $\mbox{tr}(A) + \overline{\mbox{tr}(A)}=0$. In the latter case ${\mathfrak g}$ is necessarily unimodular.
\end{lemma}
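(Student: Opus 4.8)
The plan is to read both statements off the Chern curvature formulas displayed immediately above the lemma, exploiting only the fact that each relevant expression is a sum of terms of a definite sign. No curvature recomputation is needed, since the components of $R$ and the value of $\hat{s}$ have already been assembled in terms of $\lambda$, $v$, and $A$.

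First I would treat Chern flatness. By the displayed formulas the only components of $R$ that can be nonzero are $R_{1\bar{1}1\bar{1}}$, the vector $(R_{1\bar{1}i\bar{1}})$, and the matrix $(R_{1\bar{1}i\bar{j}})$ with $i,j>1$, so $R=0$ is equivalent to the simultaneous vanishing of these three. I would begin with
$$R_{1\bar{1}1\bar{1}}=-2\lambda^2-|v|^2.$$
Since $\lambda\in{\mathbb R}$ and $|v|^2\geq 0$, the right-hand side is a sum of two non-positive terms, hence vanishes if and only if $\lambda=0$ and $v=0$. Once $\lambda=v=0$, the component $(R_{1\bar{1}i\bar{1}})=-A^{\ast}v$ vanishes automatically, while $(R_{1\bar{1}i\bar{j}})=vv^{\ast}+[A,A^{\ast}]-\lambda(A+A^{\ast})$ collapses to $[A,A^{\ast}]$. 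Thus $R=0$ forces $[A,A^{\ast}]=0$, and conversely if $\lambda=0$, $v=0$, and $[A,A^{\ast}]=0$ then all three families of components vanish, giving the first equivalence.

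Next I would treat the altered scalar curvature, starting from the displayed expression
$$\hat{s}=-\lambda^2-|v|^2-(\mbox{tr}(A)+\overline{\mbox{tr}(A)})^2.$$
Here $\lambda^2\geq 0$, $|v|^2\geq 0$, and since $\mbox{tr}(A)+\overline{\mbox{tr}(A)}=2\re(\mbox{tr}(A))$ is real its square is also $\geq 0$, so $\hat{s}$ is a sum of three non-positive terms and vanishes if and only if $\lambda=0$, $v=0$, and $\mbox{tr}(A)+\overline{\mbox{tr}(A)}=0$. In that case $\lambda+\mbox{tr}(A)+\overline{\mbox{tr}(A)}=0$ holds, so by Proposition \ref{propAL}(ii) the algebra ${\mathfrak g}$ is unimodular, which completes the argument.

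The computation is routine and there is no genuine obstacle once the curvature and $\hat{s}$ formulas are granted; the only point to watch is the sign bookkeeping, namely confirming that $\lambda$ and $\mbox{tr}(A)+\overline{\mbox{tr}(A)}$ are genuinely real so that each summand has a definite sign. It is precisely this reality that makes both ``only if'' directions immediate rather than requiring any further manipulation.
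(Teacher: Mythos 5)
Your proof is correct and is essentially the paper's own argument: the paper derives the same displayed formulas for $R_{1\bar{1}1\bar{1}}$, $(R_{1\bar{1}i\bar{1}})$, $(R_{1\bar{1}i\bar{j}})$ and $\hat{s}$, and then states the lemma as an immediate consequence of exactly the sign analysis you carry out. Nothing is missing.
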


Using the definition of three Ricci curvatures, a direct calculation leads to the following:

\begin{lemma} \label{lemma7}
Assume that ${\mathfrak g}$ is as in Lemma \ref{lemma6} and is unimodular, namely, $\lambda + \mbox{tr}(A) + \overline{\mbox{tr}(A)}=0$. Then the following holds:
\begin{itemize}
\item $s=-\lambda^2 \leq 0, \ \mbox{and} \ s=0 \ \Longleftrightarrow \ Ric^{(1)}=0 \ \Longleftrightarrow \ \lambda =0$;
\item $\hat{s}=-2\lambda ^2-|v|^2 \leq 0, \ \mbox{and} \ \hat{s}=0 \ \Longleftrightarrow \ Ric^{(3)}=0 \ \Longleftrightarrow \ \lambda =0, \,v=0$;
\item $ Ric^{(2)}=0 \ \Longleftrightarrow \ R=0 \ \Longleftrightarrow \  \lambda =0, \,v=0,\, [A, A^{\ast}]=0$.
\end{itemize}
\end{lemma}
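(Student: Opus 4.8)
The plan is to feed the explicit structure constants (\ref{CandD-aala}) directly into the trace formulas (\ref{Ric1})--(\ref{shat}) for the Ricci and scalar curvatures. The decisive bookkeeping observation is that, under an admissible frame, every nonzero component $D^j_{ik}$ has its last lower index equal to $1$; consequently in any contraction a free index is forced to be $1$, and most sums collapse to a single surviving term. First I would record the two auxiliary quantities that enter these formulas: $\zeta_i=\sum_r D^r_{ri}$ vanishes for $i>1$ and equals $\lambda+\mbox{tr}(A)$ for $i=1$, while from the torsion (\ref{torsion-aala}) together with (\ref{eta}) one has $\eta_i=v_i$ for $i>1$ and $\eta_1=-(\mbox{tr}(A)+\overline{\mbox{tr}(A)})$. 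Imposing unimodularity, $\mbox{tr}(A)+\overline{\mbox{tr}(A)}=-\lambda$, turns these into $\eta_1=\lambda$ and $\re(\zeta_1)=\lambda+\re(\mbox{tr}(A))=\tfrac{\lambda}{2}$, which are exactly the combinations that appear below.

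For the first bullet I would substitute into (\ref{s}); since only $\zeta_1\neq 0$, $s=-(\zeta_1\overline{\eta}_1+\overline{\zeta}_1\eta_1)=-2\lambda\,\re(\zeta_1)=-\lambda^2$. The same collapse applied to (\ref{Ric1}) shows that $Ric^{(1)}$ has a single nonzero entry $Ric^{(1)}_{1\bar1}=-\lambda^2$, so $s=0\Leftrightarrow Ric^{(1)}=0\Leftrightarrow\lambda=0$ is immediate. For the second bullet I would evaluate $\sum_{r,s,t}D^t_{rs}\overline{D^t_{sr}}$, which forces $r=s=1$ and hence $t=1$, giving $\lambda^2$; together with $|\eta|^2=\lambda^2+|v|^2$ this yields $\hat{s}=-2\lambda^2-|v|^2$ from (\ref{shat}). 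Computing $Ric^{(3)}$ from (\ref{Ric3}) the same way, the only diagonal entry is $Ric^{(3)}_{1\bar1}=-2\lambda^2-|v|^2=\hat{s}$ and the off-diagonal entries are multiples of an $A^{\ast}v$-type term; the vanishing of the single $(1,\bar1)$ entry already forces $\lambda=0$ and $v=0$, after which all remaining entries vanish automatically, giving the claimed equivalences.

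The third bullet is where the computation is heaviest, so I expect the main obstacle to be the careful evaluation of the four index-blocks of $Ric^{(2)}$ in (\ref{Ric2}). Splitting the three pieces of (\ref{Ric2}) according to whether the free indices are $1$ or $>1$, I would find $Ric^{(2)}_{1\bar1}=-2\lambda^2-|v|^2$, while on the block $i,j>1$ the $\lambda$- and $v$-dependent terms combine so that, after using $\mbox{tr}(AA^{\ast})=\mbox{tr}(A^{\ast}A)$ and $\re(\mbox{tr}(A))=-\tfrac{\lambda}{2}$, the trace reproduces $s=-\lambda^2$ as a consistency check. The key point is again that $Ric^{(2)}_{1\bar1}=0$ forces $\lambda=0$ and $v=0$; once these hold, the $i,j>1$ block of $Ric^{(2)}$ reduces precisely to $(AA^{\ast}-A^{\ast}A)_{ij}=[A,A^{\ast}]_{ij}$, so $Ric^{(2)}=0\Leftrightarrow\lambda=0,\,v=0,\,[A,A^{\ast}]=0$. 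Since Lemma \ref{lemma6} identifies this same triple of conditions with Chern flatness $R=0$, the three-way equivalence follows at once. The only genuinely delicate part is tracking the conjugations and transposes so that the off-diagonal blocks are correctly identified as $A^{\ast}v$, $\,^t\!A\,\overline{v}$, and $A^{\ast}A$; everything else is a mechanical collapse of sums forced by the rule that the surviving lower index is $1$.
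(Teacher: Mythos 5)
Your proposal is correct and is exactly the "direct calculation" the paper invokes without writing out: substituting the admissible-frame structure constants (\ref{CandD-aala}) into (\ref{Ric1})--(\ref{shat}), using that every nonzero $D^j_{ik}$ has $k=1$ so the contractions collapse, and then comparing the $Ric^{(2)}=0$ condition with Lemma \ref{lemma6}. All the intermediate values you record ($\zeta_1=\lambda+\mathrm{tr}(A)$, $\eta_1=\lambda$, $Ric^{(1)}_{1\bar 1}=-\lambda^2$, $Ric^{(2)}_{1\bar 1}=-2\lambda^2-|v|^2$, lower block $[A,A^{\ast}]$ after $\lambda=v=0$) check out against the paper's parallel computations.
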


Next let us consider the {\em Chern K\"ahler-like} condition (cf. \cite{YZ16}, \cite{AOUV}), which means a Hermitian manifold whose Chern curvature tensor obeys the symmetry $R_{i\bar{j}k\bar{\ell}}= R_{k\bar{j}i\bar{\ell}}$. As is well-known, the first Bianchi identity implies that
\begin{equation*}
T^{\ell}_{ik,\bar{j}} = R_{k\bar{j}i\bar{\ell}} - R_{i\bar{j}k\bar{\ell}}
\end{equation*}
for any indices, where index after comma denotes covariant derivatives with respect to the Chern connection $\nabla$. So the Chern K\"ahler-like condition simply means that $\nabla_{\overline{X}}T=0$ for any type $(1,0)$ tangent vector $X$. Chern flat metrics are certainly Chern K\"ahler-like, and it would be very interesting to find examples of compact Chern K\"ahler-like manifolds that are not Chern flat. In \cite{ZZ-JGP}, it was shown that amongst all complex nilmanifolds, Chern K\"ahler-like ones are all Chern flat. For almost abelian ones, the phenomenon continues:

\begin{lemma} \label{lemma8}
Let ${\mathfrak g}$ be a unimodular almost abelian Lie algebra with a Hermitian structure $(J,g)$. Then $g$ is Chern K\"ahler-like if and only if it is Chern flat.
\end{lemma}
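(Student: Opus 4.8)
The reverse implication is immediate, since a vanishing curvature tensor trivially obeys every K\"ahler symmetry. For the forward direction the plan is to use the characterization recalled just above the lemma: $g$ is Chern K\"ahler-like if and only if $\nabla_{\overline{X}}T=0$ for every $(1,0)$-vector $X$, i.e. $T^j_{ik,\bar\ell}=0$ for all indices. I would substitute the explicit torsion components from (\ref{torsion-aala}) and the Chern coefficients $\Gamma^j_{ik}=D^j_{ik}$ read off from (\ref{CandD-aala}) into the covariant-derivative formula (\ref{Tderivativebar}).

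Two structural observations cut the computation down dramatically. First, every nonzero component of $D$ in (\ref{CandD-aala}) has its last lower index equal to $1$, so $\Gamma^j_{ik}=0$ whenever $k\neq 1$; consequently all the barred Christoffel factors in (\ref{Tderivativebar}) vanish for $\ell\neq 1$, and $T^j_{ik,\bar\ell}=0$ automatically unless $\ell=1$. Second, by (\ref{torsion-aala}) the torsion $T^j_{ik}$ vanishes unless $1\in\{i,k\}$; a direct check then shows $T^j_{ik,\bar 1}=0$ already when $i,k\geq 2$, since every surviving term carries a factor $\Gamma^i_{11}$ or $\Gamma^k_{11}$ with $i,k\geq 2$ and $D^m_{11}=0$ for $m\geq 2$. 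By the skew-symmetry $T^j_{ik}=-T^j_{ki}$ it therefore suffices to evaluate $T^j_{1k,\bar 1}$ for $k\geq 2$.

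Carrying out this evaluation, with $B:=A+A^{\ast}$ the Hermitian torsion matrix (so that $T^1_{1k}=v_k$ and $T^j_{1k}=B_{kj}$ for $j\geq 2$), I expect the case $j=1$ to collapse, after the $\lambda v_k$ contributions cancel, to
\begin{equation*}
T^1_{1k,\bar 1}=(A^{\ast}v)_k ,
\end{equation*}
and the case $j\geq 2$ to assemble into the matrix identity
\begin{equation*}
\lambda B + [A^{\ast},B] - v v^{\ast}=0 .
\end{equation*}
Thus the Chern K\"ahler-like condition is equivalent to $A^{\ast}v=0$ together with this matrix equation.

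The decisive step is then a trace argument. Taking the trace of the matrix identity annihilates the commutator $[A^{\ast},B]$ and leaves $\lambda\,\mbox{tr}(B)=|v|^2$. Since $\mbox{tr}(B)=\mbox{tr}(A)+\overline{\mbox{tr}(A)}$, unimodularity (Proposition \ref{propAL}(ii)) gives $\lambda=-\mbox{tr}(B)$, whence $-\lambda^2=|v|^2\geq 0$ and therefore $\lambda=0$ and $v=0$. With these, the matrix identity degenerates to $[A^{\ast},A]=0$, i.e. $[A,A^{\ast}]=0$. By Lemma \ref{lemma6} the three conditions $\lambda=0$, $v=0$, $[A,A^{\ast}]=0$ are exactly Chern flatness, which finishes the proof. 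The only genuine labor is the index bookkeeping in computing $T^j_{1k,\bar 1}$ from (\ref{Tderivativebar}); the short trace-plus-unimodularity step is where the conclusion is actually forced.
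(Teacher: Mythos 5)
Your proposal is correct and follows essentially the same route as the paper: both reduce the Chern K\"ahler-like condition via (\ref{Tderivativebar}) to $A^{\ast}v=0$ and the matrix identity $\lambda(A+A^{\ast})+[A^{\ast},A]-vv^{\ast}=0$ (your $[A^{\ast},B]$ equals $[A^{\ast},A]$), and then conclude by the same trace-plus-unimodularity step $-\lambda^2=|v|^2$ followed by Lemma \ref{lemma6}. The only difference is cosmetic: the paper evaluates just the components with $i=\ell=1<k$, while you additionally verify that all other covariant derivatives vanish automatically.
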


\begin{proof}
Assume that $g$ is Chern K\"ahler-like. Then we have $T^{\ell}_{ik,\bar{j}}=0$ for any $i,j,k,\ell$. By (\ref{Tderivativebar}), this means that
$$
 \sum_r \left(  T^j_{rk} \overline{\Gamma^i_{r\ell}} +  T^j_{ir} \overline{\Gamma^k_{r\ell}}  - T^r_{ik} \overline{\Gamma^r_{j\ell}}   \right)  =0.
$$
Take $i=\ell=1<k$, since $\Gamma =D$, while $T$ and $D$ are given by (\ref{torsion-aala}) and (\ref{CandD-aala}), we get from the $j=1$ case that $A^{\ast}v=0$, and from the $j>1$ case that
$$  \lambda (A+A^{\ast}) + [A^{\ast}, A] - v \,v^{\ast} =0. $$
Taking trace, we get $-\lambda^2-|v|^2=0$, hence $\lambda =0$, $v=0$, and $[A, A^{\ast}]=0$. Thus $R=0$.
\end{proof}

Next we mention the result by Fino and Paradiso \cite[Theorem 3.1]{FP2} which characterize {\em CYT} metrics on almost abelian Lie algebras.  Recall that a Hermitian metric is called {\em Calabi-Yau with torsion,} or {\em CYT} for short, if the first Ricci curvature of the Bismut connection vanishes. Denote by $\theta^b$, $\Theta^b$ the connection and curvature matrix of $\nabla^b$ under an admissible frame. Then one has
$$ \mbox{tr}(\Theta^b) = d (\mbox{tr}(\theta^b)) = d \left( \mbox{tr}(\theta) +\eta - \overline{\eta}\right) = d(\alpha - \overline{\alpha}),  $$
where $\alpha = (\lambda - \overline{\mbox{tr}(A)})\varphi_1 + \sum_{r=2}^n v_r\varphi_r$. By the structure equation (\ref{structure1}), and assuming that ${\mathfrak g}$ is unimodular, then one gets
\begin{equation*}
 \mbox{tr}(\Theta^b) =d(\alpha - \overline{\alpha}) = -(2|v|^2+3\lambda^2)\varphi_1\overline{\varphi}_1 + (\varphi_1+\overline{\varphi}_1)\sum_{i,j=2}^n \{ \overline{A_{ij}} v_i\varphi_j - A_{ij}  \overline{v}_i\overline{\varphi}_j \}.
\end{equation*}
Therefore it holds that
\begin{proposition}[\cite{FP2}] \label{prop8}
For any unimodular almost abelian Lie algebra ${\mathfrak g}$ with Hermitian structure $(J,g)$, it is CYT if and only if $\lambda =0$, $v=0$, and $\mbox{tr}(A) + \overline{\mbox{tr}(A)}=0$. In particular, it is CYT when and only when it is balanced.
\end{proposition}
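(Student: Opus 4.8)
The plan is to read the Calabi--Yau-with-torsion condition directly off the trace of the Bismut curvature matrix. Recall that the first Bismut Ricci curvature is represented by the $2$-form $\mbox{tr}(\Theta^b)$, the curvature of the induced connection on $\det T^{1,0}$; since the Bismut connection is not of pure type $(1,1)$, this $2$-form carries components of bidegree $(2,0)$, $(1,1)$ and $(0,2)$, and $g$ is CYT precisely when all of them vanish. Thus the whole problem reduces to setting the expression for $\mbox{tr}(\Theta^b)=d(\alpha-\overline{\alpha})$ computed just above equal to zero and extracting the resulting algebraic conditions on $\lambda$, $v$, $A$.

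First I would separate $\mbox{tr}(\Theta^b)$ into its bidegree components. The only term proportional to $\varphi_1\wedge\overline{\varphi}_1$ is $-(2|v|^2+3\lambda^2)$: every other summand pairs $\varphi_1$ or $\overline{\varphi}_1$ with some $\varphi_j$ or $\overline{\varphi}_j$ with $j\geq 2$, and these basis $2$-forms are linearly independent from $\varphi_1\wedge\overline{\varphi}_1$. Hence the vanishing of $\mbox{tr}(\Theta^b)$ forces $2|v|^2+3\lambda^2=0$, and since this is a sum of two nonnegative real quantities we obtain $v=0$ and $\lambda=0$. Conversely, once $v=0$ the entire double sum disappears (each of its terms carries a factor $v_i$ or $\overline{v}_i$) and the $\varphi_1\wedge\overline{\varphi}_1$ coefficient vanishes as well, so $\mbox{tr}(\Theta^b)=0$. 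Therefore $g$ is CYT if and only if $\lambda=0$ and $v=0$.

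It then remains to match this with the stated form of the condition. Because $\mathfrak{g}$ is unimodular, Proposition \ref{propAL}(ii) gives $\lambda+\mbox{tr}(A)+\overline{\mbox{tr}(A)}=0$, so under this hypothesis $\lambda=0$ is equivalent to $\mbox{tr}(A)+\overline{\mbox{tr}(A)}=0$; substituting, the CYT condition becomes $\lambda=0$, $v=0$, $\mbox{tr}(A)+\overline{\mbox{tr}(A)}=0$, as claimed. Finally, by Proposition \ref{propAL}(iv) the metric is balanced exactly when $v=0$ and $\mbox{tr}(A)+\overline{\mbox{tr}(A)}=0$, which in the unimodular case is the same pair of conditions; hence CYT and balanced coincide.

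The computation is essentially complete once $\mbox{tr}(\Theta^b)$ is in hand, so the real work lies entirely in the preceding curvature computation rather than in the proposition itself. The only point that requires a little care is the conceptual identification of the first Bismut Ricci with the full trace $\mbox{tr}(\Theta^b)$, including its $(2,0)$ and $(0,2)$ parts; once one is clear that CYT demands the vanishing of the whole $2$-form, the argument is a one-line extraction from the coefficient of $\varphi_1\wedge\overline{\varphi}_1$.
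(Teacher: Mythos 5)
Your proposal is correct and follows essentially the same route as the paper: the paper's proof consists precisely of the displayed computation of $\mbox{tr}(\Theta^b)=d(\alpha-\overline{\alpha})$ under the unimodularity assumption, from which the conditions $\lambda=0$, $v=0$ are read off the coefficient of $\varphi_1\wedge\overline{\varphi}_1$ and the equivalence with balancedness follows from Proposition \ref{propAL}. The only quibble is the phrasing of your converse direction, where you should say ``once $\lambda=0$ and $v=0$'' rather than ``once $v=0$'' before asserting that the $\varphi_1\wedge\overline{\varphi}_1$ coefficient $-(2|v|^2+3\lambda^2)$ vanishes.
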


Next let us prove Proposition \ref{prop1} stated in the introduction, which describes all Bismut torsion-parallel or Bismut K\"ahler-like metrics on almost abelian Lie algebras.

\begin{proof}[{\bf Proof of Proposition \ref{prop1}:}] Let $({\mathfrak g},J,g)$ be an almost abelian Lie algebra with Hermitian structure. Assume that $g$ is Bismut torsion-parallel ({\em BTP}), namely, $\nabla^bT=0$. The expression of the covariant derivatives of Chern torsion $T$ with respect to the Bismut connection $\nabla^b$  are given by formula (\ref{Tderivative}) and (\ref{Tderivativebar}) with $\Gamma =D$ replaced by $\Gamma^b = \Gamma + T= D+T$. So we know that {\em BTP} is equivalent to
\begin{eqnarray}
&&   \sum_r \left( - T^j_{rk} (D^r_{i\ell} + T^r_{i\ell}) -  T^j_{ir} (D^r_{k\ell} + T^r_{k\ell}) + T^r_{ik} (D^j_{r\ell} +T^j_{r\ell})   \right)    =0, \label{BTP1} \\
&& \sum_r \left(  T^j_{rk} (\overline{D^i_{r\ell}}+ \overline{T^i_{r\ell}})  +  T^j_{ir} ( \overline{D^k_{r\ell}} + \overline{T^k_{r\ell}})   - T^r_{ik} (\overline{D^r_{j\ell}}  + \overline{T^r_{j\ell}})  \right)  =0 \label{BTP2}
\end{eqnarray}
for any $1\leq i<k\leq n$ and any $j,\ell$. Now by (\ref{CandD-aala}), (\ref{torsion-aala}), and by taking $i=j=1<k,\ell$ in (\ref{BTP2}), we get
$$ \sum_{r>1} T^r_{1k} \overline{T^r_{1\ell} } =0,$$
which means that $(n-1)\times (n-1)$ matrix $X=(T^j_{1i})= A + A^{\ast}$ satisfies  $XX^{\ast}=0$, hence $X=0$. That is, the {\em BTP} assumption implies $A+A^{\ast} =0$ for almost abelian Lie algebras. With $A$ being skew-Hermitian, by (\ref{torsion-aala}) we know that the only possibly non-zero torsion components are $T^1_{1i}=v_i$. Letting $i=j=\ell =1<k$ in (\ref{BTP1}), we get $Av=0$.

Conversely, when $A+A^{\ast} =0$ and $Av=0$, the only possibly non-zero torsion components are $T^1_{1i}=v_i$, and it is easy to check that both (\ref{BTP1}) and (\ref{BTP2}) are satisfied. Thus for almost ableian Lie algebra, the {\em BTP} condition is equivalent to $A+A^{\ast}=0$ and $Av=0$.  This establishes the first statement in Proposition \ref{prop1}.

Now we check the {\em BKL} condition for almost abelian Lie algebra. By \cite{ZZ-pluriclosed}, we know that a Hermitian metric $g$ is {\em BKL} if and only if it is both {\em BTP} and pluriclosed. Now by Arroyo-Lafuente's result, (v) of Proposition \ref{propAL}, we know that an almost abelian Lie algebra will be pluriclosed when and only when $A$ is normal and whose eigenvalues have real part equal to either $0$ or $-\frac{\lambda}{2}$. But this condition is automatically satisfied when $A$ is skew-Hermitian. So for almost abelian Lie algebras, the {\em BKL} condition and the {\em BTP} conditions coincide. Note that the Lie algebra will be unimodular in this case if and only if $\lambda =0$. This completes the proof of Proposition \ref{prop1}.
\end{proof}

On general Hermitian manifolds, {\em BKL} and {\em BTP} metrics were studied by several authors in recent years, for more discussions on such special Hermitian structures, we refer the readers to  \cite{AOUV},  \cite{AP},  \cite{FT}, \cite{FTV}, \cite{LS}, \cite{YZ16}, \cite{YZZ},  \cite{ZZ-pluriclosed},  \cite{ZZ-JGP}, \cite{ZZ-JGA},  \cite{ZZ-BTP}, \cite{Zheng} and the references therein.

Astheno-K\"ahler metrics were introduced by Jost-Yau in \cite{JY}, as a relaxation to the K\"ahler condition. It is defined by $\partial \overline{\partial} (\omega^{n-2})=0$ where $\omega$ is the K\"ahler form and $n$ is the complex dimension of the manifold. This special type of Hermitian metrics has seen a number of recent development, and we refer the readers to \cite{CR}, \cite{Correa}, \cite{FGV1}, \cite{FGV}, \cite{FT1}, \cite{LU}, \cite{Popovici}, \cite{ST} for instance.

\begin{proof}[{\bf Proof of Proposition \ref{prop2}:}]  Let $({\mathfrak g}, J,g)$ be a an almost abelian Lie algebra with Hermitian structure. Then there exists a unitary frame $e$ so that the structure equation takes the form of (\ref{structure1}), namely, $d\varphi_1=-\lambda \varphi_1\overline{\varphi}_1$, $\, d\varphi_i= -\overline{v}_i\varphi_1\overline{\varphi}_1 + \sum_j \overline{A_{ij}} (\varphi_1+\overline{\varphi}_1)\varphi_j$. Here and below we set the index range to be  $2\leq i,j, k,\ell \leq n$. The K\"ahler form $\omega$ is given by $\omega = \sqrt{-1} \left( \varphi_1\overline{\varphi}_1 + \sum_i \varphi_i\overline{\varphi}_i \right)$. Let us write $H=A+A^{\ast}$. Since $H$ is Hermitian, by a unitary change of $e$  we may assume that $H$ is diagonal, say $H=\mbox{diag}\{ h_2, \ldots , h_n\}$. We have
\begin{eqnarray*}
-\sqrt{-1} \partial \omega & = &  - \varphi_1\overline{\varphi}_1 \sum_i v_i\varphi_i + \varphi_1 \sum_{i,j} H_{ij} \varphi_i\overline{\varphi}_j  \, = \, - \varphi_1\overline{\varphi}_1 \sum_i v_i\varphi_i +  \varphi_1 \sum_i h_i \varphi_i\overline{\varphi}_i, \\
\partial \omega  \wedge \overline{\partial} \omega & = & - \varphi_1\overline{\varphi}_1 \big(\sum_{i} h_i \varphi_i\overline{\varphi}_i\big)^2 \, = \, - \varphi_1\overline{\varphi}_1 \sum_{i,j} h_ih_j \,\varphi_i\overline{\varphi}_i \varphi_j\overline{\varphi}_j \,,  \\
\sqrt{-1} \partial  \overline{\partial} \omega  & = & - \varphi_1\overline{\varphi}_1 \sum_{i,j} L_{ij} \varphi_i\overline{\varphi}_j, \ \
\end{eqnarray*}
where $L=\lambda H + A^{\ast}H + HA = \lambda H + H^2 + [A^{\ast}, A]$. Now assume that $n\geq 4$. The astheno-K\"ahler condition means that
\begin{equation} \label{eq:astheno}
 \frac{1}{n-2} \partial  \overline{\partial} \omega^{n-2} = \partial  \overline{\partial} \omega \wedge \omega^{n-3} + (n-3) \partial \omega  \wedge \overline{\partial} \omega \wedge \omega^{n-4} = 0.
 \end{equation}
For any $2\leq i<j\leq n$, if we wedge the above equality with $\varphi_i \overline{\varphi}_j$, then the second term on the right hand side produce zero, while the first term gives us $L_{ij}=0$, $\forall \,i\neq j$. So $L$, hence $[A^{\ast}, A]$, is diagonal. On the other hand, the  diagonal entries
$$ [A^{\ast}, A]_{ii} = [H, A]_{ii} = h_iA_{ii} - A_{ii}h_i = 0, $$
so we conclude that $[A^{\ast}, A]=0$. Thus $L=\lambda H + H^2$. Now if we fix  any $i$ and wedge the equality (\ref{eq:astheno}) with $\varphi_i \overline{\varphi}_i$, we get
\begin{eqnarray*}
0 & = & \sum_{j\neq i} L_{jj} + \sum_{j\neq k; \,j,k\neq i} h_jh_k \\
& = & \mbox{tr}(L) - L_{ii} + (\sum_{j\neq i} h_j)^2 - \sum_{j\neq i} h_j^2\\
& = & \mbox{tr}(L) - L_{ii} + (h-h_i)^2 - \sum_j h_j^2 + h_i^2 \\
& = & \lambda h + \mbox{tr}(H^2) - \lambda h_i - h_i^2  +h^2-2hh_i + h_i^2 - \mbox{tr}(H^2) +h_i^2 \\
& = & (\lambda h+h^2) -(\lambda +2h)h_i + h_i^2 \\
& = & ( \lambda + h-h_i) (h- h_i),
\end{eqnarray*}
where $h$ stands for  $\mbox{tr}(H)$.  Therefore each $h_i$ is equal to either $h$ or $\lambda +h$. Suppose there are $k$ entries amongst $h_2$ through $h_n$ that equals to $h$, while the other $(n-1-k)$ entries equal to $\lambda +h$, then we have $h = h_2+ \cdots + h_n = kh + (n-1-k)(\lambda +h)$, therefore,
\begin{equation} \label{eq:kandh}
 (n-1-k)\lambda + (n-2)h = 0.
 \end{equation}
To summarize, if $(J,g)$ is a Hermitian structure on an almost abelian  Lie algebra ${\mathfrak g}$ and $g$ is astheno-K\"ahler, then $[A^{\ast}, A]=0$, so $A$ is unitary similar to a diagonal matrix $\mbox{diag}\{ \lambda_2, \ldots , \lambda_n\}$ where $\lambda_i$ are eigenvalues of $A$. Furthermore, twice of the real part of $\lambda_i$ is equal to either $\lambda +h$ or $h$, where $h$ is the sum of $2\mbox{Re}(\lambda_i)$ for all $2\leq i\leq n$. If we denote by $k$ the number of these eigenvalues with $2\mbox{Re}(\lambda_i)=h$, then $k$ and $h$ obeys the equation (\ref{eq:kandh}). Equivalently, $A$ can be described in the following way: there exists a unitary $(n-1)\times (n-1)$ matrix $U$ such that
$$ U A U^{\ast} = \left[ \begin{array}{ccc} \lambda_2 & & \\ & \ddots & \\ && \lambda_n \end{array} \right], \ \ \ \ \ \left[ \begin{array}{ccc} 2\mbox{Re}(\lambda_2) & & \\ & \ddots & \\ && 2\mbox{Re}(\lambda_n) \end{array} \right] = \left[ \begin{array}{cc} hI_k & \\  &(\lambda +h)I_{n-1-k} \end{array} \right] $$
where $k$ is an integer in the range $0\leq k\leq n-1$ and $h$ is determined by $k$ and $\lambda$ via (\ref{eq:kandh}).

Note that $h=\mbox{tr}(A)+ \overline{\mbox{tr}(A)}$, so ${\mathfrak g}$ is unimodular if and only if $\lambda +h =0$. In this case (\ref{eq:kandh}) becomes $(1-k)\lambda =0$, thus for the real part of eigenvalues of $A$, one of them equal to $-\frac{\lambda}{2}$ while the other $n-2$ of them are equal to $0$. By part (ii) and (v) of Proposition \ref{propAL}, we know that this is exactly the condition for $g$ to be pluriclosed. This completes the proof of Proposition \ref{prop2}.   \end{proof}

\vspace{0.3cm}

\section{Lie algebras with $J$-invariant abelian ideal of codimension $2$}

Now let us consider Lie algebras with codimension $2$ abelian ideals. Let ${\mathfrak g}$ be a Lie algebra and ${\mathfrak a}\subseteq {\mathfrak g}$ be an abelian ideal of codimension $2$. Note that such ${\mathfrak g}$ will always be solvable of step at most $3$, but in general it will not be of $2$-step solvable.

Suppose $(J,g)$ is a Hermitian structure on ${\mathfrak g}$. Then the codimension of ${\mathfrak a}_J:= {\mathfrak a}\cap {\mathfrak a}$ is either $2$ or $4$, and it is $2$ if and only if $J{\mathfrak a}={\mathfrak a}$. In this article we will focus on this case, and leave the other case to a future project.

Analogous to the almost ableian case, we may take a unitary basis $\{ e_1, \ldots , e_n\}$ of ${\mathfrak g}^{1,0}$ so that
$$ {\mathfrak a} = \mbox{span}_{\mathbb R} \{ e_i+\overline{e}_i, \, \sqrt{-1}(e_i-\overline{e}_i); \ 2\leq i\leq n\} . $$
We will again call such a basis an {\em admissible frame}. Since ${\mathfrak a}$ is abelian and is an ideal, we have
\begin{equation*}
C^{\ast}_{ij}=D^j_{\ast i} =C^1_{\ast \ast} = D^{i}_{1\ast} =D^{\ast}_{1i} = 0, \ \ \ \ \ \forall \ 2\leq i,j\leq n.
\end{equation*}
So the only possibly non-zero components of $C$ and $D$ are
\begin{equation}  \label{CD-XYZ}
C^{j}_{1i}= X_{ij}, \ \ \ D^1_{11}=\lambda, \ \ \ D^j_{i1}=Y_{ij}, \ \ \ D^1_{ij}=Z_{ij}, \ \ \ D^1_{i1}=v_i, \ \ \ \ \ 2\leq i,j\leq n,
\end{equation}
where $\lambda \geq 0$, $v\in {\mathbb C}^{n-1}$ is a column vector and $X$, $Y$, $Z$ are $(n-1)\times (n-1)$ complex matrices. Here and from now on we have rotated the angle of $e_1$ to assume that $D^1_{11}\geq 0$. Note that when $\{ e_2, \ldots , e_n\}$ is changed to $\{ \tilde{e}_2, \ldots , \tilde{e}_n\}$  by a unitary matrix $U$, then $v$ is changed to $Uv$, $X$ and $Y$ are changed to $UXU^{\ast}$ and $UYU^{\ast}$ respectively, while $Z$ is changed to $UZ\,^t\!U$. For convenience, let us write $\varphi$ for the column vector $^t\!(\varphi_2, \ldots , \varphi_n)$, then the structure equation (\ref{structure}) becomes (\ref{structure2}) stated in \S 1:
\begin{equation*}
\left\{ \begin{split} d\varphi_1 \, = \, -\lambda \varphi_1\overline{\varphi}_1 , \hspace{4.1cm} \\
d\varphi \, = \, - \varphi_1\overline{\varphi}_1 \overline{v} -\varphi_1\,^t\!X \varphi + \overline{\varphi}_1 \overline{Y} \varphi - \varphi_1 \overline{Z} \,\overline{\varphi}. \end{split} \right.
\end{equation*}
By the Bianchi equation (\ref{CC}) - (\ref{CDbar}), or equivalently, by $d^2\varphi =0$, we get the system of matrix equations (\ref{XYZ}) stated in \S 1:
\begin{equation*}
\left\{ \begin{split} \lambda (X^{\ast}\!+Y)+ [X^{\ast} ,Y]  -  Z\overline{Z} \, = \, 0, \\
\lambda Z - ( Z \,^t\!X + Y Z )\, =\, 0. \hspace{1.2cm} \end{split} \right.
\end{equation*}
By (\ref{torsion}), we know that the possibly non-zero components of the Chern torsion are:
\begin{equation} \label{torsion-XYZ}
T^1_{1i}=v_i, \ \ \ \ T^1_{ij}=Z_{ji}-Z_{ij}, \ \ \ \ T^j_{1i}=Y_{ij}-X_{ij}, \ \ \ \ \ 2\leq i,j\leq n.
\end{equation}

\begin{proof} [{\bf Proof of Proposition \ref{prop3}:}]
Let ${\mathfrak g}$ be a Lie algebra with an abelian ideal ${\mathfrak a}$ of codimension $2$ and with a Hermitian structure $(J,g)$, and assume that $J{\mathfrak a}={\mathfrak a}$. By our discussion above, there exists a unitary frame under which the structure equation is given by (\ref{structure2}), where $\lambda \geq 0$, $v\in {\mathbb C}^{n-1}$ is a column vector and $X$, $Y$, $Z$ are complex $(n-1)\times (n-1)$ matrices satisfying (\ref{XYZ}). By (\ref{unimo}), we know that ${\mathfrak g}$ is unimodular if and only if $\sum_{r=1}^n (C^r_{ri}+D^r_{ri})=0$ for each $1\leq i\leq n$. By (\ref{CD-XYZ}), we see that this number is $0$ when $i>1$ and is $\lambda +\mbox{tr}(Y)- \mbox{tr}(X)$ when $i=1$. So (i) holds. Similarly, Gauduchon's torsion $1$-form $\eta$ has components $\eta_i = \sum_{r=1}^nT^r_{ri}$.  By (\ref{torsion-XYZ}), we get $\eta_1=\mbox{tr}(X)- \mbox{tr}(Y)$ and $\eta_i=v_i$ for $i>1$, so (ii) holds. Also by (\ref{torsion-XYZ}), we know that $T=0$ if and only if $v=0$ plus $X=Y$ and $\,^t\!Z=Z$. Now assuming that ${\mathfrak g}$ is unimodular. Then by (i) we get $\lambda =0$, hence the first equation of (\ref{XYZ}) reads $[X^{\ast},X]= Z\overline{Z}=ZZ^{\ast}$. Taking trace on both sides, we get $\mbox{tr}(ZZ^{\ast})=0$ thus $Z=0$ and $X$ is normal. So by a unitary change of $\{ e_2, \ldots , e_n\}$ if necessary, we may assume that $X$ is diagonal. This leads to the structure equation stated in case (iii) of the proposition.

For part (iv), by Lemma \ref{lemma2}  we know that $g$ will be pluriclosed if and only if equation (\ref{SKT}) is satisfied. When $i$ or $j$ is $1$, the equation (\ref{SKT}) automatically holds. Take $i=j=1<k,\ell$ in (\ref{SKT}), by (\ref{CD-XYZ}) and (\ref{torsion-XYZ}) we get
$$ \sum_{r>1} \left( (X_{kr}-Y_{kr})\overline{X_{\ell r} } + (Z_{rk}-Z_{kr})\overline{Z_{r\ell} } + (Y_{r\ell} -X_{r\ell}) \overline{Y_{rk}} +\lambda ( Y_{k\ell} -X_{k\ell}) \right) =0. $$
That is,
\begin{equation*}
 XX^{\ast}-YX^{\ast} + \,^t\!Z \overline{Z} - Z \overline{Z}    + Y^{\ast} Y - Y^{\ast} X + \lambda (Y-X) = 0,
 \end{equation*}
namely (\ref{SKT-XYZ}) holds. So $g$ is pluriclosed if and only if equations (\ref{XYZ}) and (\ref{SKT-XYZ}) hold. This proves  part (iv) thus we have  completed the proof of Proposition \ref{prop3}.
\end{proof}

Note that if we take the trace of (\ref{SKT-XYZ}), we would get
$$ |\!|X-Y|\!|^2 + |\!|Z|\!|^2 = \mbox{tr}(Z\overline{Z})+ \lambda ( \mbox{tr}(X) -\mbox{tr}(Y)) = \lambda ( \mbox{tr}(X) + \overline{ \mbox{tr}(X) } ), $$
where the last equality is from taking trace on the first equation of (\ref{XYZ}). When $\lambda =0$ or when $\mbox{tr}(X) + \overline{ \mbox{tr}(X) } =0$, we would have $Z=0$ and $X=Y$. If ${\mathfrak g}$ is unimodular, then $\lambda=0$ and $[X,X^{\ast}]=0$, so $X$ is unitary similar to a diagonal matrix. That is, when ${\mathfrak g}$ is unimodular and $\lambda =0$, pluriclosed metrics admit unitary frame under which the structure equation becomes:
\begin{equation*}
 d\varphi_1=0, \ \ \ d\varphi_i = -\overline{v}_i\varphi_1\overline{\varphi}_1 + (\overline{a}_i\overline{\varphi}_1-a_i\varphi_1)\varphi_i, \ \ \ \ i>1.
\end{equation*}
Note that when $v\neq 0$, the above metric is neither balanced nor Chern flat. Also, there are plenty of solutions to equations (\ref{XYZ}) and (\ref{SKT-XYZ}) with $\lambda >0$. For instance, for any positive constant $\lambda >0$, if we let
$$ Z=0, \ \ \ \ X = -Y = \frac{\lambda }{2}\,\mbox{diag} \{ 1, 0, \ldots , 0\}, $$
and let $v$ be arbitrary, then ${\mathfrak g}$ would be unimodular and  equations (\ref{XYZ}) and (\ref{SKT-XYZ}) are satisfied, hence $g$ is pluriclosed.

\begin{proof}[{\bf Proof of Proposition \ref{prop4}:}]
Let $({\mathfrak g}, J,g)$ be a Hermitian structure on a Lie algebra which contains a $J$-invariant abelian ideal of codimension $2$. First let us examine the  behavior of the Chern curvature tensor. By (\ref{curvature}), (\ref{CD-XYZ}),  and (\ref{Ric1}) - (\ref{Ric3}), we have
\begin{eqnarray}
Ric^{(1)} & = &  \left[ \begin{array}{cc} s,  & 0 \\ 0, & 0 \end{array} \right], \ \ \ \ \ \ \mbox{where} \ \ \ s=-\lambda (2\lambda +\mbox{tr}(Y) +\overline{\mbox{tr}(Y)} ), \nonumber \\
Ric^{(2)} & = &  \left[ \begin{array}{cc} -(|v|^2+|\!|Z|\!|^2+2\lambda^2),  & -(\,^t\!v Z^{\ast}+v^{\ast}Y) \\ - (Z\bar{v} + Y^{\ast}v), & vv^{\ast} +ZZ^{\ast} + [Y,Y^{\ast}]-\lambda (Y+Y^{\ast}) \end{array} \right],   \label{Ric2-XYZ}\\
Ric^{(3)} & = &  \left[ \begin{array}{cc} \hat{s},  & -\,^t\!v \overline{Z} \\ - Y^{\ast}v, & 0 \end{array} \right], \ \ \ \ \  \mbox{where} \ \ \ \hat{s}=-|v|^2 -\lambda (2\lambda +\mbox{tr}(Y) +\overline{\mbox{tr}(X)} ).  \nonumber
\end{eqnarray}
In the last line we used the fact that $\mbox{tr}(Z\overline{Z})=\lambda (\mbox{tr}(Y) + \overline{\mbox{tr}(X) } ) $ by taking trace on the first equation of (\ref{XYZ}).
Now if $R=0$, then by $Ric^{(2)}_{1\bar{1}}=0$, we get $\lambda =0$, $v=0$ and $Z=0$. Thus the only possibly non-zero components of $D$ are $D^j_{i1}=Y_{ij}$ for $i,j>1$. By (\ref{curvature}), for $2\leq k,\ell \leq n$, we have
$$ \left( R_{1\bar{1}k\bar{\ell}}\right) = YY^{\ast}-Y^{\ast}Y. $$
So $Y$ is normal and can be diagonalizable via unitary similarity. On the other hand, when $\lambda =0$ and $Z=0$, the first equation of (\ref{XYZ}) says that $[X^{\ast},Y]=0$, so if we write
$$ Y = \left[ \begin{array}{ccc} \lambda_1 I_{n_1} & & \\ & \ddots & \\ && \lambda_rI_{n_r} \end{array} \right], $$
where $n_1+ \cdots +n_r=n-1$ and $\lambda_1, \ldots , \lambda_r$ are distinct, then $X$ would be block-diagonal
$$ X = \left[ \begin{array}{ccc} X_1 & & \\ & \ddots & \\ && X_r \end{array} \right] $$
where $X_i$ is any $n_i\times n_i$ complex matrix, $1\leq i\leq r$. Therefore if $R=0$, then $\lambda =0$, $v=0$, $Z=0$, $Y$ is normal, and $[X^{\ast}, Y]=0$. By a unitary change of the frame, $Y$ can be made diagonal while $X$ is block-diagonal as above. The converse is certainly true, namely, in this case the only possibly non-zero components of $D$ are $D^i_{i1}=Y_{ii}$, and by (\ref{curvature}) one easily checks that $R=0$. This establishes (i).   Part (ii) is immediate from the above formula for the first Chern Ricci $Ric^{(1)}$ and Chern scalar curvature $s$. For part (iii), assume that the second Chern Ricci is non-neagtive. By looking at the component $Ric^{(2)}_{1\bar{1}}$ in  (\ref{Ric2-XYZ}), we know that we must have $\lambda=0$, $v=0$, $Z=0$. The lower right corner of (\ref{Ric2-XYZ}) now says that $[Y, Y^{\ast}]\geq 0$. But the matrix has zero trace, so it must be identically zero, and we end up in the  $R=0$ situation.

For part (iv), note that Gauduchon's torsion $1$-form $\eta$ has components $\eta_1=\mbox{tr}(Y-X)$ and $\eta_i=v_i$ for $i>1$. So by (\ref{CD-XYZ}), (\ref{structure2}) and (\ref{torsion-XYZ}) we obtain the following
 \begin{eqnarray*}
 d\eta & = &  d(\sum_r \eta_r\varphi_r) \, = \, \eta_1d\varphi_1 - \sum_{r>1} v_r \left( \overline{v}_r \varphi_1\overline{\varphi}_1 +\sum_{i>1} \{ X_{ir} \varphi_1\varphi_i +\overline{Y}_{ri}\varphi_i\overline{\varphi}_1  + \overline{Z}_{ri}\varphi_1\overline{\varphi}_i \}  \right)  \\
 & = & -(\lambda \eta_1+|v|^2) \varphi_1\overline{\varphi}_1 - \sum_{i,r>1} v_r \left(  X_{ir} \varphi_1\varphi_i +\overline{Y}_{ri}\varphi_i\overline{\varphi}_1  + \overline{Z}_{ri}\varphi_1\overline{\varphi}_i   \right)
\end{eqnarray*}
Since the trace of the connection matrices for Chern and Bismut are related by $\mbox{tr}(\theta^b)=\mbox{tr}(\theta)+\eta -\overline{\eta}$, so the Bismut Ricci form is given by
\begin{eqnarray*}
\mbox{tr}(\Theta^b) & = & \mbox{tr}(\Theta) + d\eta -d\overline{\eta}  \ \,= \ \,s \varphi_1\overline{\varphi}_1 + d\eta -d\overline{\eta} \\
& = & -\left( 2|v|^2 + \lambda \{ 2\lambda + 2\mbox{tr}(Y) + 2\overline{\mbox{tr}(Y)} -\mbox{tr}(X) -\overline{\mbox{tr}(X)} \} \right)\varphi_1\overline{\varphi}_1 + \\
& & + \sum_{i,r>1} \left( \overline{v}_r\overline{X_{ir}} \overline{\varphi}_1\overline{\varphi}_i - v_rX_{ir} \varphi_1\varphi_i \right) - \sum_{i,r>1} \left( v_r\overline{Z_{ri}} +\overline{v}_r Y_{ri} \right)
\varphi_1\overline{\varphi}_i \\
& & - \sum_{i,r>1} \left( v_r\overline{Y_{ri}} +\overline{v}_r Z_{ri} \right)
\varphi_i\overline{\varphi}_1
\end{eqnarray*}
That is,
\begin{equation} \label{BismutRicci}
\left\{ \begin{split}  (Ric^b)^{2,0} \, = \, \frac{1}{2} \left[ \begin{array}{cc} 0 & ^t\!v \,^t\!X \\ - Xv & 0 \end{array} \right] ,   \hspace{2.6cm} \\
 (Ric^b)^{1,1} \, =  \, \left[ \begin{array}{cc} s^b & - (v^{\ast} Y + \,^t\!v \overline{Z}) \\ - (Y^{\ast}v +\,^t\!Z\overline{v}) & 0 \end{array} \right] , \end{split} \right.
\end{equation}
where the {\em Bismut scalar curvature} $s^b$ is given by (\ref{Bismutscalar}) as follows:
\begin{equation*}
s^b \, = \, -2|v|^2 - \lambda (2\lambda + 2\mbox{tr}(Y) + 2\overline{\mbox{tr}(Y)} -\mbox{tr}(X) -\overline{\mbox{tr}(X)}).
\end{equation*}
When ${\mathfrak g}$ is unimodular, $\lambda = \mbox{tr}(X) - \mbox{tr}(Y)$, and $s^b=-2|v|^2 -\lambda (\mbox{tr}(Y) + \overline{\mbox{tr}(Y)})$. By (\ref{BismutRicci}), we know that the rank of the $(2,0)$-part of $Ric^b$ is either $2$ or $0$ (when $Xv=0$), while the $(1,1)$-part of $Ric^b$ has rank at most $3$, or at most $1$ when $Y^{\ast}v+ \, ^t\!Z\overline{v}=0$. The entire  $Ric^b=0$ if and only if $s^b=0$ and $Xv=0$, $Y^{\ast}v+ \, ^t\!Z\overline{v}=0$. This proves part (iv) and completes the proof of Proposition \ref{prop4}.
\end{proof}

The remaining part of the article is devoted to the proof of Proposition \ref{prop5}, where we want to characterize Bismut torsion-parallel ({\em BTP})  and Bismut K\"ahler-like ({\em BKL}) metrics on Lie algebras with $J$-invariant abelian ideal of codimension $2$. Since {\em BKL} means {\em BTP} plus pluriclosed \cite{ZZ-pluriclosed}, we will just focus on the {\em BTP} condition.

\begin{proof}[{\bf Proof of Proposition \ref{prop5}:}]
Again let $({\mathfrak g},J,g)$ be a Hermitian structure on a unimodular Lie algebra with a $J$-invariant abelian ideal of codimension $2$. Assume that $g$ is  {\em BTP}. This means that both (\ref{BTP1}) and (\ref{BTP2}) are satisfied, which now takes the form
\begin{equation} \label{BTP-XYZ}
\left\{ \begin{split}   \sum_r \left(  T^j_{rk} \tilde{\Gamma}^r_{i\ell}  +  T^j_{ir} \tilde{\Gamma}^r_{k\ell}  - T^r_{ik} \tilde{\Gamma}^j_{r\ell}   \right)    =0, \\
 \sum_r \left(  T^j_{rk} \overline{ \tilde{\Gamma}^i_{r\ell} }   +  T^j_{ir}  \overline{ \tilde{\Gamma}^k_{r\ell} }    - T^r_{ik} \overline{\tilde{\Gamma}^r_{j\ell}}    \right)  =0
\end{split} \right.
\end{equation}
for any indices $1\leq i<k\leq n$ and any $1\leq j, \,\ell \leq n$, where $\tilde{\Gamma}=D+T$ are the connection coefficients for the Bismut connection. For simplicity, let us denote by
\begin{equation*}
A=\,^t\!Z-Z, \ \ \ \ B=Y-X.
\end{equation*}
Then the only possibly non-zero components of $T$ and $\tilde{\Gamma}$ are
\begin{equation*}
\left\{ \begin{split}  T^1_{1i}=v_i, \ \ \ T^1_{ij}=A_{ij}, \ \ \ T^j_{1i}=B_{ij}; \hspace{3.7cm} \\
\tilde{\Gamma}^1_{11}=\lambda , \ \ \
\tilde{\Gamma}^1_{1i}=v_i, \ \ \ \tilde{\Gamma}^1_{ij}=Z_{ji}, \ \ \ \tilde{\Gamma}^j_{i1}=X_{ij}, \ \ \ \tilde{\Gamma}^j_{1i}=B_{ij},
\end{split} \right.
\end{equation*}
where $2\leq i,j \leq n$. We remark that when the basis $\{ e_2,\ldots , e_n\}$ is changed to $\{ \tilde{e}_2, \ldots , \tilde{e}_n\}$ by a unitary matrix $U\in U(n-1)$, the matrix $A$ is changed into $UA\,^t\!U$, just like $Z$, while $B$ is changed to $UBU^{\ast}$, just like $X$ or $Y$. So strictly speaking we should denote the entries of $B$ as $B_i^{\ j}$ or $B_{i\bar{j}}$, but for the sake of simplicity we will just write it as $B_{ij}$.

Note that for both $T$ and $\tilde{\Gamma}$, the components are zero unless at least one of the indices is $1$. If we take $i,j,k,\ell >1$ in (\ref{BTP-XYZ}), then only the $r=1$ terms survive, and we get
\begin{eqnarray}
 B_{kj} Z_{\ell i} - B_{ij} Z_{\ell k} - A_{ik} B_{\ell j} & = &  0  ,   \label{eq1} \\
 B_{kj}  \overline{ B_{\ell i} } - B_{ij} \overline{  B_{\ell k} } - A_{ik} \overline{Z_{\ell j} } & = & 0 ,  \label{eq2}
\end{eqnarray}
where $2\leq i,j,k, \ell \leq n$. Write $t_B=\mbox{tr}(B)$ and $t_Z=\mbox{tr}(Z)$.  By letting $k=j$ and summing up, or $k=\ell$ and summing up, or $j=\ell$ and summing up, we get
\begin{eqnarray}
&& \left\{ \begin{split}  t_B Z - Z\,^t\!B + BA = 0 \\ t_B \overline{B} - \overline{B}\,^t\!B +  \overline{Z}A = 0  \end{split} \right.    \label{eq3} \\
&& \left\{ \begin{split} \,^t\!Z B -t_Z B -  AB = 0 \\  B^{\ast}B -\overline{t_B} B -  A\overline{Z} = 0  \end{split} \right.   \label{eq4} \\
&& \left\{ \begin{split} BZ - \,^t\!Z\,^t\!B + t_B A  = 0  \\ B \overline{B} - B^{\ast} \,^t\!B + \overline{t_Z} A = 0 \end{split} \right. \label{eq5}
\end{eqnarray}
Next we consider the cases where at least one of the four indices is $1$. If $i=j=\ell =1$, then by the fact that  $\tilde{\Gamma}^{i}_{11} = \tilde{\Gamma}^{1}_{i 1}= 0$ for any $i>1$, (\ref{BTP-XYZ}) yields
\begin{equation} \label{Xv}
Xv = X^{\ast} v=0.
\end{equation}
Similarly, when exactly one of the indices is $1$, the $\ell =1$ case of (\ref{BTP-XYZ}) automatically holds, while the $j=1$ and  $i=1$ cases lead to
\begin{eqnarray}
&& \left\{ \begin{split}  v_k Z_{\ell i} - v_i Z_{\ell k} - v_{\ell} A_{ik} =0, \\
v_k \overline{B_{\ell i}} - v_i \overline{B_{\ell k}} -  \overline{v_{\ell} }A_{ik} = 0,
 \end{split} \right.   \ \ \ \ \forall \ 2\leq i,k,\ell \leq n; \label{vZBA} \\
&& \left\{ \begin{split} v_{\ell} B_{kj} - v_k B_{\ell j} = 0, \\  \overline{v_{\ell} }B_{kj} -  v_k\overline{Z_{\ell j}} = 0,  \end{split} \right.  \ \ \ \ \ \ \ \ \ \ \ \ \forall \ 2\leq j,k,\ell \leq n.  \label{vBZ}
\end{eqnarray}
Now consider the cases when exactly two of the four indices are $1$,  (\ref{BTP-XYZ})  yields the following
\begin{eqnarray}
&& BA=Z\,^t\!B, \ \ \ \ \overline{Z}A = \overline{B} \,^t\!B; \label{BA} \\
 && [B, X] = [B, X^{\ast} ] =\lambda B; \label{XB}\\
&& XA+A\,^t\!X = X^{\ast}A + A\overline{X} =\lambda A. \label{XA}
\end{eqnarray}
Of course the Lie algebra ${\mathfrak g}$ will be unimodular if and only if
\begin{equation} \label{unimodular}
\lambda + t_B = 0.
\end{equation}
The metric $g$ will be {\em BTP} when and only when (\ref{XYZ}) and (\ref{eq1}) -- (\ref{unimodular}) are all satisfied. When $v\neq 0$, it is easy to find out all the solutions to this system of matrix equations. By a unitary change of $\{ e_2, \ldots , e_n\}$ if necessary, we may assume that $v_2>0$ and $v_3=\cdots = v_n=0$. By (\ref{vZBA}) and (\ref{vBZ}), we know all rows  except the first of $Z$ are zero, and $B=\overline{Z}$. The other equations now imply that
$$ \lambda =0, \ \  X = \left[ \begin{array}{cc} 0 & 0 \\ 0 & X_1 \end{array}  \right] , \ \ \ Z = \left[ \begin{array}{cc} 0 & \,^t\!z \\ 0 & 0 \end{array}  \right], $$
and $X_1z = X_1^{\ast}z=0$, $[X_1, X_1^{\ast}]=0$. So depending on whether $z=0$ or not, we end up in one of the two cases for {\em BTP} metrics with $v\neq 0$ on unimodular ${\mathfrak g}$: there exists unitary frame so that the structure equation takes the form:
\begin{eqnarray*}
&& \left\{ \begin{split} d\varphi_1 = 0,  \hspace{4.2cm}\\
d\varphi_2 = - v_2 \varphi_1\overline{\varphi}_1, \hspace{3cm}\\
d\varphi_i = (\overline{a}_i\overline{\varphi}_1 - a_i \varphi_1)\varphi_i, \ \ \ 3\leq i\leq n;
\end{split} \right.   \ \ \ \ \ \ \ \ \ \ \ \ \ \ \ \ \ \ \ \          \\
  \mbox{or} && \left\{ \begin{split} d\varphi_1 =d\varphi_3 =  0, \hspace{3.15cm} \\
d\varphi_2 = - v_2 \varphi_1\overline{\varphi}_1 + p \,(\overline{\varphi}_1\varphi_3 - \varphi_1 \overline{\varphi}_3),  \hspace{0.1cm} \\
d\varphi_i = (\overline{a}_i\overline{\varphi}_1 - a_i \varphi_1)\varphi_i, \ \ \ 4\leq i\leq n.
\end{split} \right.
\end{eqnarray*}
They are respectively (\ref{BTPv1}) and (\ref{BTPv2}) stated in \S 1 before.
Here $v_2>0$, $p>0$, and $a_i$ are arbitrary complex numbers.

Next we consider the $v=0$ case. First we claim that $\lambda $ must zero. Assume otherwise, then by comparing (\ref{eq3}) with (\ref{BA}), we see that $B$ and $Z$ must be zero, which forces $\lambda = -t_B =0$ since ${\mathfrak g}$ is unimodular. So $\lambda $ must be zero. Next we claim that $BZ=0$. Since $t_B=0$, by the first line of (\ref{eq5}), we know that $BZ$ is symmetric. On the other hand, by the first equation of (\ref{BA}), we get
$$ \,^t(BA) = B\,^t\!Z = B(A+Z) = BA + BZ. $$
Hence $BZ$ is skew-symmetric. Therefore, we must have $BZ=0$ and the claim is proved. So for unimodular ${\mathfrak g}$ with {\em BTP} metric and with $v=0$, the equations (\ref{XYZ}) and (\ref{eq1}) -- (\ref{unimodular}) become
\begin{equation} \label{BTP-final}
\left\{   \begin{split} \lambda = t_B=0, \ \ v=0,  \ \ BZ=0, \ \
XA, \ X^{\ast}A, \ BA=B\,^t\!Z \ \  \mbox{are symmetric}, \\
[B,X] = [B,X^{\ast}]=0, \ \ [X^{\ast}, X]=Z\overline{Z}, \ \ Z\,^t\!X+XZ=0,  \hspace{2.35cm} \\
\overline{Z}A=\overline{B}\,^t\!B, \ \ A\overline{Z} = B^{\ast}B, \ \
ZB = t_ZB, \ \ \,^t\!(B\overline{B}) - B\overline{B} = \overline{t_Z} A. \hspace{1.5cm}
\end{split}         \right.
\end{equation}
Since $B^{\ast}B= A\overline{Z} = \,^t\!Z\overline{Z}-Z\overline{Z}= \,^t\!Z\overline{Z} - [X^{\ast},X]$, by taking trace we get $|\!|B|\!|^2= |\!|Z|\!|^2$. Therefore, if $B=0$ or $Z=0$, we get $B=Z=A=0$ and the metric $g$ will be K\"ahler and flat. Similarly, if $A=0$, then by $\overline{B}\,^t\!B = \overline{Z}A$ we get $B=0$ hence $Z=0$. So in the following let us assume that $A,Z,B$ are all non-trivial.

Since $BZ=0$, $Z$  must be singular. Denote by $r$ its rank, we have $ 1\leq r<n-1$. Choose unitary basis $\{ e_2, \ldots , e_n\}$ so that the first $r$ rows of $Z$ are linearly independent while the other rows are all zero, and write $B$ in the same block form:
$$ Z = \left[ \begin{array}{cc} x & y \\ 0 & 0 \end{array} \right] , \ \ \ B = \left[ \begin{array}{cc} 0 & u \\ 0 & w \end{array} \right] $$
where $x$ is an $r\times r$ matrix, and the first column blocks of $B$ vanish because $BZ=0$. Since $\overline{Z}A=\overline{B}\,^t\!B$, the lower right corner gives us $\overline{w}\,^t\!w=0$ hence $w=0$. Therefore $B$ has only the upper right corner thus $B\overline{B}=0$, which leads to $\overline{t_Z}A=0$. Therefore we must have $t_Z=0$ as $A$ is assumed to be non-trivial.

Now we claim that the block $x$ must also be zero. Since $w=0$, by comparing the upper left corner of $A\overline{Z}=B^{\ast }B$, we get $(\,^t\!x-x)\overline{x}=0$, so $\mbox{tr}(x\overline{x})=|\!|x|\!|^2$. On the other hand, since $Z\overline{Z}=[X^{\ast},X]$, we have $\mbox{tr}(x\overline{x})=\mbox{tr}(Z\overline{Z})=0$, hence $x=0$.

Note that the $r\times (n-1-r)$ matrix $y$ has rank $r$, so after a unitary change in the last $(n-1-r)$ elements of $e_i$, we may assume that $y=(z,0)$, where $z$ is an $r\times r$ non-singular matrix. Write $u=(b,c)$ where $b$ is $r \times r$, then by $A\overline{Z}=B^{\ast}B$ we know that we must have $c=0$, hence the matrices now take the form
$$
Z = \left[ \begin{array}{ccc} 0 & z & 0 \\ 0 & 0 & 0 \\ 0 & 0 & 0 \end{array} \right] , \ \ \ A = \left[ \begin{array}{ccc} 0 & -z & 0 \\ \,^t\!z & 0 & 0 \\ 0 & 0 & 0 \end{array} \right], \ \ \ B = \left[ \begin{array}{ccc} 0 & b & 0 \\ 0 & 0 & 0 \\ 0 & 0 & 0 \end{array} \right] ,
$$
where $z$ and $b$ are $r\times r$ matrices with $z$ non-singular. The equations in (\ref{BTP-final}) implies that
\begin{equation}  \label{matrixeq}
\overline{z}\,^t\!z = \overline{b}\,^t\!b , \ \ \ ^t\!z \overline{z} = b^{\ast} b, \ \ \ b\,^t\!z= z\,^t\!b,
\end{equation}
and via the non-singularity of $b$ that
$$ X = \left[ \begin{array}{ccc} 0 & 0 & 0 \\ 0 & 0 & 0 \\ 0 & 0 & x \end{array} \right] , \ \ \ \  [x^{\ast } , x]=0. $$
All the solutions to the system of matrix equations (\ref{matrixeq}) are given by the following elementary lemma, and we include its proof here for readers convenience:

\begin{lemma} \label{lemma9}
Two non-singular square matrices $z$ and $b$ satisfy (\ref{matrixeq}) if and only if
\begin{equation*} \label{matrixeq-solution}
b=USV^{\ast}, \ \ \ z= USW\,^t\!V,
\end{equation*}
where $S>0$ is diagonal, $U$, $V$, $W$ are unitary, with $\,^t\!W=W$ and $WS=SW$.
\end{lemma}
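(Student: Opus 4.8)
The forward direction is a direct verification: substituting $b=USV^{\ast}$ and $z=USW\,^t\!V$ into each of the three equations in (\ref{matrixeq}) and using the identities $\,^t\!U\,\overline{U}=I$ and $\,^t\!V\,\overline{V}=I$, together with $\,^t\!W=W$ (so that $\overline{W}=W^{\ast}=W^{-1}$) and $WS=SW$, reduces both sides of each equation to a common expression (one of $\overline{U}S^2\,^t\!U$, $VS^2V^{\ast}$, $USWS\,^t\!U$). I would record these computations but they are routine. The substance is the converse, and the plan is to reduce the three equations to two structural statements about a single unitary factor read off from a singular value decomposition of $b$.

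First I would observe that the first equation $\overline{z}\,^t\!z=\overline{b}\,^t\!b$ is, after entrywise conjugation, equivalent to $zz^{\ast}=bb^{\ast}$. Since $b$ is nonsingular, this is in turn equivalent to the existence of a unitary matrix $Q$ with $z=bQ$: one sets $Q=b^{-1}z$ and checks that $QQ^{\ast}=b^{-1}(zz^{\ast})(b^{\ast})^{-1}=I$. Thus the first equation encodes exactly that $b$ and $z$ share a common left factor. I would then fix a singular value decomposition $b=USV^{\ast}$ with $S>0$ diagonal and $U,V$ unitary, and define $W:=V^{\ast}Q\,\overline{V}$. This $W$ is unitary by construction, and using $\overline{V}\,^t\!V=I$ one gets $z=bQ=USV^{\ast}Q=USW\,^t\!V$ for free, so the desired normal form holds with this $S$ and $W$; it remains only to prove $\,^t\!W=W$ and $WS=SW$.

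For symmetry I would substitute $z=USW\,^t\!V$ and $b=USV^{\ast}$ into the third equation $b\,^t\!z=z\,^t\!b$. The left side becomes $US\,^t\!W S\,^t\!U$ and the right side $USWS\,^t\!U$, and cancelling the invertible factors $U$ and $S$ yields $\,^t\!W=W$. For the commutation I would substitute into the second equation $^t\!z\,\overline{z}=b^{\ast}b$, which gives $\,^t\!W S^{2}\overline{W}=S^{2}$; once $\,^t\!W=W$ is known, a symmetric unitary satisfies $\overline{W}=W^{\ast}=W^{-1}$, so this reads $WS^{2}W^{-1}=S^{2}$, i.e. $W$ commutes with $S^{2}$ and hence with its positive square root $S$.

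The one point requiring care — and the only place where nonsingularity is genuinely used — is the passage from the first equation to the factorization $z=bQ$ with $Q$ unitary; this is what guarantees that a single $U$ and $S$ serve simultaneously for $b$ and $z$, and it is precisely what makes the SVD reduction possible. Once that common factorization is in hand, the extraction of $\,^t\!W=W$ and $WS=SW$ from the remaining two equations is purely formal. I would also remark that nonsingularity of $z$ is then automatic from $z=bQ$, so assuming it in the statement is consistent.
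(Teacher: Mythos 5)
Your proof is correct and takes essentially the same route as the paper: both extract $U$ and $S$ from the first equation (the paper by diagonalizing the positive definite matrix $\overline{b}\,^t\!b = US^2U^{\ast}$ and reading off the unitary factors of $\overline{b}$ and $\overline{z}$, you by passing through $zz^{\ast}=bb^{\ast}$, hence $z=bQ$ with $Q$ unitary, and an SVD of $b$), and then both use the third and second equations to force $\,^t\!W=W$ and $WS^2=S^2W$, hence $WS=SW$. The reorganization via $z=bQ$ is a cosmetic difference only.
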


\begin{proof}[{\bf Proof of Lemma \ref{lemma9}:}]
Since  $\overline{z}\,^t\!z = \overline{b}\,^t\!b$ is positive definite, it equals to $US^2U^{\ast}$ for some unitary matrix $U$ and a positive definite  diagonal matrix $S$. This means that $V^{\ast} = S^{-1}U^{-1}\overline{b}$ and $Q=S^{-1}U^{-1}\overline{z}$ are both unitary. Plug $b=\overline{U}S\,^t\!V$ and $z=\overline{U}S\overline{Q}$ into the other two equations of (\ref{matrixeq}), we see that the unitary matrix $W=Q\overline{V}$ is symmetric and commutes with $S^2$, hence with $S$. This completes the proof of the lemma.
\end{proof}

Now let us continue with the proof of Proposition \ref{prop5}. If we group $S$ into blocks with equal diagonal values, then $W$ is block-diagonal, with each block being symmetric and unitary. Write $S_{ij}=\lambda_i \delta_{ij}$, $2\leq i,j\leq r+1$. Clearly, we can choose the unitary basis $e$ so that
$$ Z = \left[ \begin{array}{ccc} 0 & SW & 0 \\ 0 & 0 & 0 \\ 0 & 0 & 0 \end{array} \right] , \ \ \ X = \left[ \begin{array}{ccc} 0 & 0 & 0 \\ 0 & 0 & 0 \\ 0 & 0 & x \end{array} \right] , \ \ \   Y=X+B = \left[ \begin{array}{ccc} 0 & S & 0 \\ 0 & 0 & 0 \\ 0 & 0 & x \end{array} \right]  $$
where $x$ is diagonal. The structure equation now takes the form
\begin{equation*}
\left\{ \begin{split} d\varphi_1 = 0, \hspace{6.4cm} \\
d\varphi_i =  \lambda_i \overline{\varphi}_1\varphi_i - \lambda_i \varphi_1 \sum_{j=2}^{r+1} \overline{W_{ij}}\varphi_j ,  \ \ \ \ \ 2\leq i\leq r+1, \hspace{0.1cm} \\
d \varphi_i = 0, \ \ \ \ \ \ \ \ \  r+2\leq i\leq 2r+1,   \hspace{2.45cm}\\
d\varphi_i = (\overline{a}_i\overline{\varphi}_1 - a_i \varphi_1)\varphi_i, \ \ \ \ \ 2r+2\leq i\leq n,   \hspace{1.2cm}
\end{split} \right.
\end{equation*}
which is (\ref{BTPv0}) given in \S 1 before. Here each $a_i$ is complex,  $S=\mbox{diag} \{ \lambda_2 , \ldots , \lambda_{r+1}\} > 0$ and $W$ is any unitary symmetric $r\times r$ matrix satisfying $WS=SW$. This completes the proof of Proposition \ref{prop5}.
\end{proof}

Note that all three cases in Proposition \ref{prop5} are non-K\"ahler and not Chern flat, with the first two cases being not balanced while the third case being balanced. Also, since a Hermitian metric is Bismut K\"ahler-like if and only if it is both {\em BTP} and pluriclosed by \cite{ZZ-pluriclosed}, so from (\ref{SKT-XYZ}) we see that (\ref{BTPv1}) is pluriclosed hence is Bismut K\"ahler-like, while (\ref{BTPv2}) and (\ref{BTPv0}) are not plurilcosed hence not Bismut K\"ahler-like.

\vspace{0.3cm}

\vs

\noindent\textbf{Acknowledgments.} The second named author would like to thank Bo Yang and Quanting Zhao for their interests and/or helpful discussions.

\vs

\end{document}